\newtheorem{thm}{Theorem}[section]
\newtheorem{lem}[thm]{Lemma}
\newtheorem{defi}[thm]{Definition}
\newtheorem{prop}[thm]{Proposition}
\theoremstyle{definition}
\newtheorem{rem}[thm]{Remark}
\newtheorem{rems}[thm]{Remarks}
\newcommand{\Z}{\mathbb{Z}}
\newcommand{\R}{\mathbb{R}}
\newcommand{\ot}{\otimes}
\DeclareMathOperator{\B}{\mathcal{B}}
\DeclareMathOperator{\VB}{\mathcal{VB}}
\DeclareMathOperator{\W}{\mathcal{W}}
\DeclareMathOperator{\gen}{\mathcal{S}}
\DeclareMathOperator{\br}{br}
\DeclareMathOperator{\rb}{rb}
\DeclareMathOperator{\id}{id}
\DeclareMathOperator{\Aut}{Aut}
\numberwithin{equation}{section}
\title[Virtual braid groups of type~B and weak categorification]
{Virtual braid groups of type~B and \\ weak categorification}
\author{Anne-Laure Thiel}
\address{
Institut de Recherche Math\'e\-ma\-tique Avanc\'ee,
CNRS \& Universit\'e de Strasbourg,
7 rue Ren\'{e} Descartes, 67084 Strasbourg, France}
\email{thiel@math.unistra.fr}
\date{17.01.2011}
\begin{document}

\begin{abstract}
We define virtual braid groups of type~$B$ and construct a morphism from such a group to the group of isomorphism classes of some invertible complexes of bimodules up to homotopy.
\end{abstract}

\maketitle

\noindent
{\sc Key Words:}
braid group, virtual braid, categorification

\medskip
\noindent
{\sc Mathematics Subject Classification (2000):}
20F36, 57M27 (Pri\-ma\-ry); 13D, 18E30, 18G35 (Secondary)

\hspace{3cm}

\section*{Introduction}
Our aim in this paper is two-fold: we first define a virtual braid group of type~$B_n$ and
next construct a weak categorification of this group. 

Virtual knots and braids have been introduced by Kauffman in~\cite{KauVir};
they can be represented by planar diagrams that are like usual link or braid diagrams
with one extra type of crossings, called virtual crossings. 
Such crossings appear for instance when
one projects a generic link in a thickened surface onto a plane (see \cite{KaKa} or \cite{Ku}).

The virtual braids with $n$~strands form a group denoted~$\VB_n$,
which generalizes the usual Artin braid group~$\B_n$. 
The group~$\VB_n$ has a presentation with $2(n-1)$~generators
\[
\sigma_1, \ldots, \sigma_{n-1}, \zeta_1, \ldots, \zeta_{n-1} \, ,
\] 
where $\sigma_1, \ldots, \sigma_{n-1}$ satisfy the usual braid relations, 
$\zeta_1, \ldots, \zeta_{n-1}$ satisfy the standard defining relations of the symmetric group~$S_n$,
and the following ``mixed relations" are satisfied
\[
\sigma_i \zeta_j = \zeta_j \sigma_i , \quad  \mbox{if} \  |i-j| >1 ,
\]
and
\[
\sigma_i \zeta_{i+1} \zeta_i = \zeta_{i+1} \zeta_i \sigma_{i+1} , \quad \mbox{if} \  1 \leq i \leq n-2 .
\]

As is well known, 
the braid group~$\B_n$ can be generalized in the framework of Coxeter groups.
Recall that given any Coxeter system~$(\W, \gen)$ one defines a generalized braid group~$\B_{\W}$
by taking the same generators $s \in \gen$ and the same relations as for the Coxeter group~$\W$,
except the relations~$s^2 = 1$. When $\W$ is the symmetric group~$S_n$, i.e. of type~$A_{n-1}$ in the classification of Coxeter groups, then $\B_{\W} = \B_n$.

A natural question is:  can one similarly attach 
a generalized virtual group~$\VB_{\W}$ to any Coxeter system~$(\W, \gen)$? 
The idea for defining such a group~$\VB_{\W}$ would be, as in the type~$A$ case,
to use two copies of the generating set~$\gen$,
and to require that the first copy satisfies the relations defining~$\B_{\W}$, 
the second one satisfies the relations defining~$\W$, 
and some mixed relations involving generators from the two copies of $\gen$ are satisfied. 

The problem is to come up with an appropriate and meaningful set of mixed relations. 
In this paper, we do not solve the problem in the general case, but
we focus on the case of Coxeter groups of type~$B$.
We use a diagrammatic description of the generalized braid group~$\B_{\W}$
of type~$B$ due to tom Dieck~\cite{tD}; 
this description is in terms of symmetric braid diagrams. 
We define a generalized virtual braid group~$\VB_{B_n}$ of type~$B_n$
by considering symmetric virtual braid diagrams up to some natural equivalence.

In a second part we ``categorify" each newly-defined group~$\VB_{B_n}$.
Here we mean categorification in the weak sense of Rouquier, who actually proves a stronger version of this result in~\cite{R}. 
More precisely, here to any word~$w$ in the generators of~$\VB_{B_n}$
we associate a bounded cochain complex~$F(w)$ of bimodules such that 
if two words $w$ and $w'$ represent the same element of~$\VB_{B_n}$,
then the corresponding cochain complexes $F(w)$ and $F(w')$ are homotopy equivalent. This leads to a morphism from the group~$\VB_{B_n}$ to the group of isomorphism classes of invertible complexes up to homotopy, which is not injective. We describe a part of its kernel.
The bimodules used here have been introduced by Soergel, they have become important in the context of knot theory because they come up in the Khovanov-Rozansky link homology (see, e.g., \cite{Kh}).
Our categorification extends Rouquier's weak categorification of generalized braid groups
and our previous categorification~\cite{Th1} of the virtual braid group~$\VB_n$.

The paper is organized as follows. 
In Section~\ref{sec-VB} we recall the definition of the virtual braid groups~$\VB_n$
and of an invariant of virtual braids due to Manturov, which we see as 
a homomorphism of~$\VB_n$ into the automorphism group of a free group.
In Section~\ref{sec-symm} we recall the definition of the generalized braid group of type~$B_n$
and tom Dieck's graphical description in terms of symmetric braid diagrams.

We propose a definition of a generalized virtual braid group of type~$B_n$ in Section~\ref{sec-virsymm}.
We show that each of its elements can be represented by a symmetric virtual braid diagram, but we do not prove the injectivity of this representation.
Using Manturov's invariant, we show that certain relations do not hold in this group although they look natural.

Section~\ref{sec-Soergel} is devoted to the Soergel bimodules of type~$B_n$.
In Section~\ref{sec-cat} we associate a cochain complex of bimodules
to each generator of our virtual braid group of type~$B_n$, 
and we show that this leads to a categorification of this group in the weak sense of Rouquier.

\section{Virtual braids}\label{sec-VB}

We first recall the definition of virtual braid groups and of Manturov's invariant for virtual braids.

\subsection{The virtual braid groups}\label{sec-VB1}

Let $n$ be an integer~$\geq 2$. 
Following \cite{Man}, \cite{V}, we define the \emph{virtual braid group}~$\VB_n$
as the group generated by
$\sigma_1, \ldots, \sigma_{n-1}$ and $\zeta_1, \ldots, \zeta_{n-1}$,
and the following relations
\begin{equation}\label{vb1}
\sigma_i \sigma_j = \sigma_j \sigma_i  , \quad \mbox{if} \  |i-j| >1 ,
\end{equation}
\begin{equation}\label{vb2}
\sigma_i \sigma_{i+1} \sigma_i = \sigma_{i+1} \sigma_i \sigma_{i+1}  , \quad  \mbox{if} \  1 \leq i \leq n-2  ,
\end{equation}
\begin{equation}\label{vb3}
\zeta_i \zeta_j = \zeta_j \zeta_i , \quad \mbox{if} \ |i-j| >1  ,
\end{equation}
\begin{equation}\label{vb4}
\zeta_i \zeta_{i+1} \zeta_i = \zeta_{i+1} \zeta_i \zeta_{i+1} , \quad  \mbox{if} \  1 \leq i \leq n-2  ,
\end{equation}
\begin{equation}\label{vb5}
\zeta_{i}^{2} = 1  , \quad  \mbox{if} \  1 \leq i \leq n-1  ,
\end{equation}
\begin{equation}\label{vb6}
\sigma_i \zeta_j = \zeta_j \sigma_i , \quad  \mbox{if} \  |i-j| >1 ,
\end{equation}
\begin{equation}\label{vb7}
\sigma_i \zeta_{i+1} \zeta_i = \zeta_{i+1} \zeta_i \sigma_{i+1} , \quad  \mbox{if} \  1 \leq i \leq n-2 .
\end{equation}
Relations~\eqref{vb1} and~\eqref{vb2} are called \emph{braid relations}
and
Relations~\eqref{vb3}--\eqref{vb5} \emph{permutation relations}.
Relations~\eqref{vb6} and~\eqref{vb7} are called \emph{mixed relations}
because they involve both generators~$\sigma_i$ and~$\zeta_i$.

Elements of~$\VB_n$ can be represented by \emph{virtual braid diagrams with~$n$ strands}.
Such a diagram is a planar braid diagram 
with \emph{virtual crossings} in addition to the usual positive and negative crossings.

The generator $\sigma_i$ can be represented by the usual braid diagram 
with a single positive crossing between the $i$th and the $i+1$st strand; see Figure~\ref{fig:genpos}, 
whereas the generator~$\zeta_i$ is represented by the virtual braid diagram 
with a single virtual crossing between the $i$th and the $i+1$st strand; 
see Figure~\ref{fig:genvirtuel}.

\begin{figure}[htbp]
  \begin{center}
   \psfrag{1}{$\scriptstyle{1}$}
  \psfrag{i-1}{$\scriptstyle{i-1}$}
  \psfrag{i}{$\scriptstyle{i}$}
  \psfrag{i+1}{$\scriptstyle{i+1}$}
  \psfrag{i+2}{$\scriptstyle{i+2}$}
  \psfrag{m}{$\scriptstyle{n}$}
   \includegraphics[height=2.7cm]{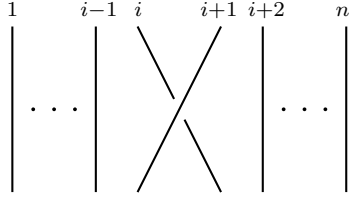}  
   \caption{The braid diagram $\sigma_i$}
   \label{fig:genpos}
  \end{center}
\end{figure}

\begin{figure}[htbp]
  \begin{center}
   \psfrag{1}{$\scriptstyle{1}$}
  \psfrag{i-1}{$\scriptstyle{i-1}$}
  \psfrag{i}{$\scriptstyle{i}$}
  \psfrag{i+1}{$\scriptstyle{i+1}$}
  \psfrag{i+2}{$\scriptstyle{i+2}$}
  \psfrag{m}{$\scriptstyle{n}$}
   \includegraphics[height=2.7cm]{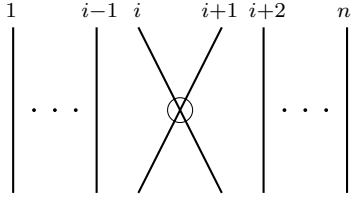}  
   \caption{The braid diagram $\zeta_i$}
   \label{fig:genvirtuel}
  \end{center}
\end{figure}

We use the following convention: if $D$ (resp.\ $D'$) is a virtual braid diagram representing
an element $\beta$ (resp.\ $\beta'$) of~$\VB_n$, then the product $\beta\beta'$
is represented by the diagram obtained by putting $D'$ on top of~$D$ and gluing
the lower endpoints of~$D'$ to the upper endpoints of~$D$.

Relations~\eqref{vb1}, \eqref{vb3},~\eqref{vb6} mean that we consider these planar diagrams
up to planar isotopy preserving the crossings. 
Relation~\eqref{vb2} illustrates the classical Reidemeister~III move
for ordinary braid diagrams.

Relations~\eqref{vb4} and~\eqref{vb5} mean that we consider the virtual braid diagrams 
up to the virtual Reidemeister~II--III moves depicted in Figure~\ref{fig:RV}.
Relation~\eqref{vb7} is a graphical transcription of the 
mixed Reidemeister move of Figure~\ref{fig:Rmixte}.
(Note that this mixed move involves one positive crossing and two virtual ones.)

\begin{figure}[h!]
  \begin{center}
   \includegraphics[height=2cm]{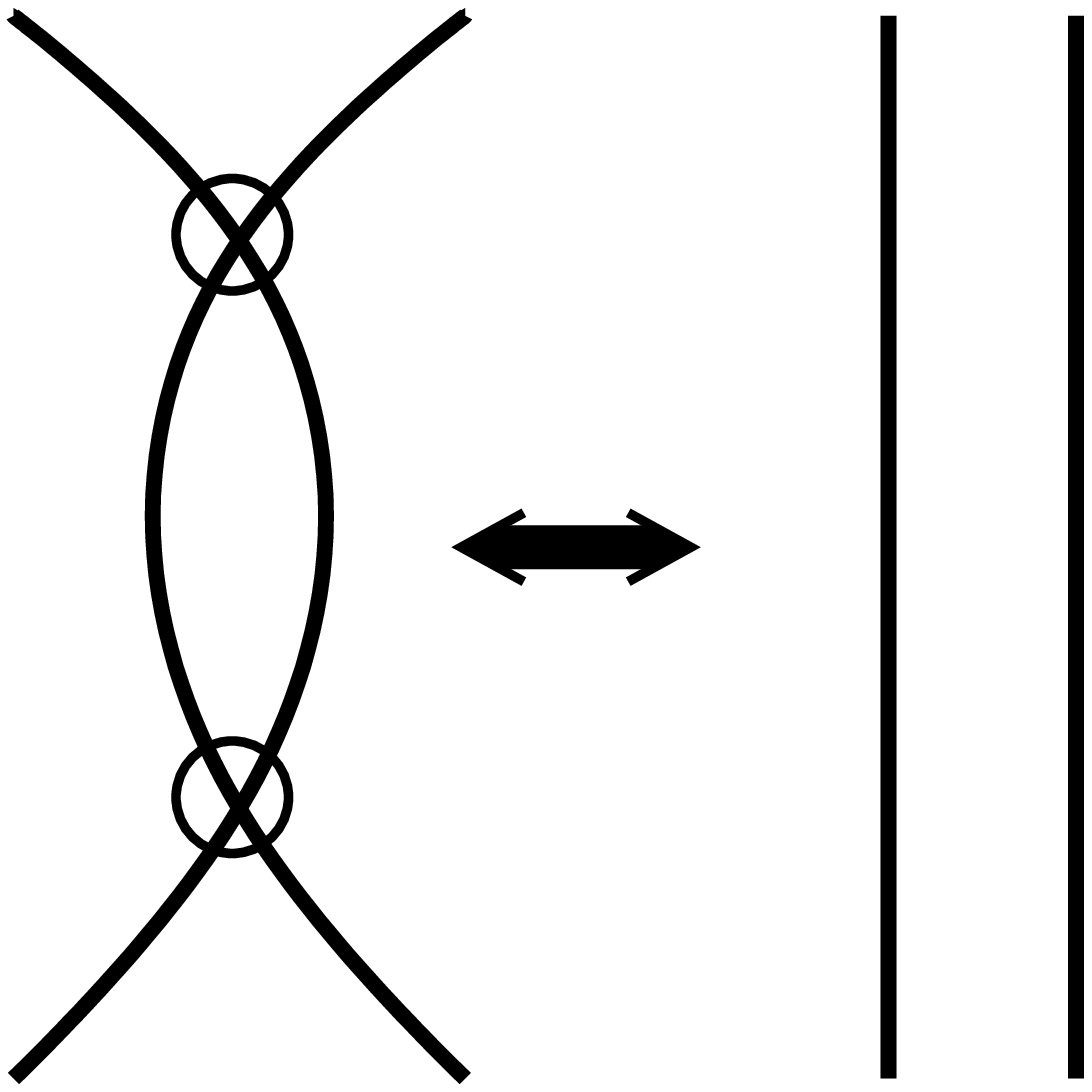}
   \hspace{2cm}
   \includegraphics[height=2cm]{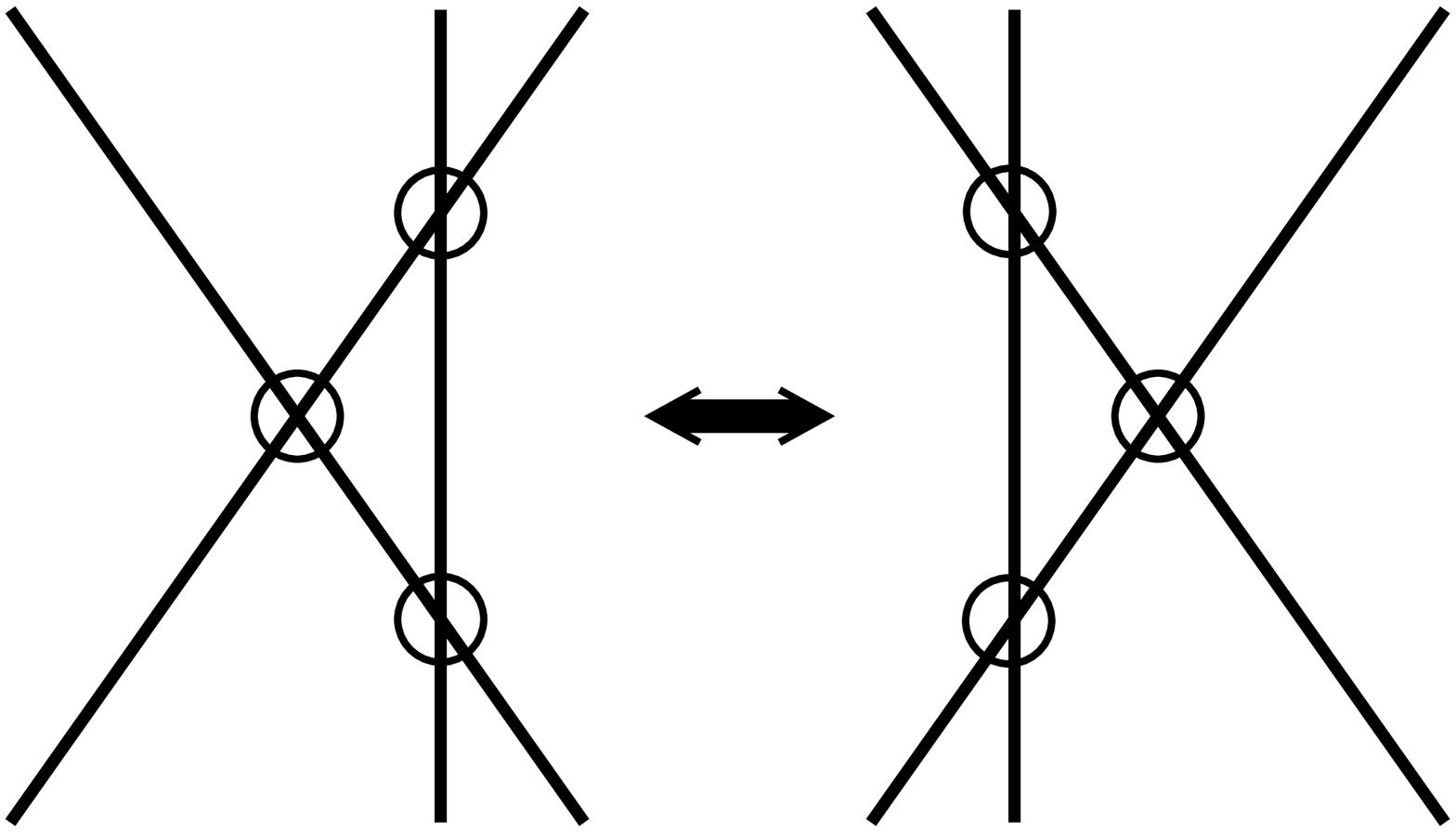}  
   \caption{Virtual Reidemeister moves}
   \label{fig:RV}
  \end{center}
\end{figure}

\begin{figure}[h!]
  \begin{center}
   \includegraphics[height=2cm]{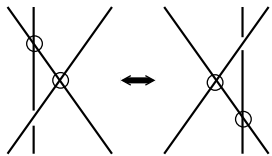}  
   \caption{The mixed Reidemeister move}
   \label{fig:Rmixte}
  \end{center}
\end{figure}

The braid group~$\B_n$ is obtained by dropping the generators $\zeta_1, \ldots, \zeta_{n-1}$. There is a natural homomorphism
$\B_n \to \VB_n$ obtained by considering each braid diagram as a virtual braid diagram
without virtual crossings.

\subsection{Manturov's invariant}\label{sec-MI}

Manturov~\cite{MaRec} constructed an invariant of virtual braids, which he conjectured to be complete. 
Since we will be using it in Proposition~\ref{bizarre}, we have to recall its definition.
We use the version that appeared in the review~\cite{Iz}.

In this version, Manturov's invariant can be seen as a group homomorphism 
$f : \VB_n \to \Aut(F_{n+1})$, where $F_{n+1}$ is the free group on $n+1$
generators $a_1,\ldots,a_n, t$.
The homomorphism~$f$ is defined by the following formulas:
\begin{align*}
f(\sigma_i)(a_j) & = 
\begin{cases}
a_{j+1} & \text{if $j=i$,}\\
a_j^{-1} a_{j-1} a_j & \text{if $j=i+1$,}\\
a_j & \text{otherwise},
\end{cases}\\
f(\zeta_i)(a_j) & =
\begin{cases}
t a_{j+1} t^{-1} & \text{if $j=i$,}\\
t^{-1} a_{j-1} t & \text{if $j=i+1$,}\\
a_j & \text{otherwise}, \\
\end{cases}
\end{align*}
and $f(\sigma_i)(t) = f(\zeta_i)(t) = t$ for all $i= 1, \ldots, n-1$.

Observe that this invariant specializes when $t=1$ to an invariant constructed in~\cite{FRR} for welded braids; see also~\cite{V}.

\section{Braid groups of type~$B$ and symmetric braid diagrams}\label{sec-symm}


Let $n$ be a positive integer. 
Consider the Coxeter group associated to the Dynkin diagram of type~$B_n$;
see~\cite{Hu}.

We denote the associated generalized braid group by~$\B_{B_n}$.
It has a presentation with $n$~generators $s_0, s_1, \ldots, s_{n-1}$, 
and three families of relations
\begin{equation}\label{relB1}
s_is_j=s_js_i , \quad  \mbox{if} \   |i-j|>1 ,
\end{equation}
\begin{equation}\label{relB2}
s_is_{i+1}s_i = s_{i+1}s_is_{i+1} , \quad  \mbox{if} \  1 \leq i \leq n-2  ,
\end{equation}
\begin{equation}
s_0s_1s_0s_1 = s_1s_0s_1s_0  .
\label{relB3}
\end{equation}

In~\cite{tD} tom Dieck gave a diagrammatic description of~$\B_{B_n}$ 
in terms of symmetric braid diagrams with $2n$~strands.

Fix a vertical line $\{0\} \times \R$ in the plane~$\R^2$. Consider the reflection in this vertical line. It acts on the underlying graph of a braid diagram. It lifts to a reflection on braid diagrams by imposing that the sign of the crossings is preserved. (Observe that if one identifies the plane $\R^2$ with $\{0\}\times\R^2 \subset \R^3$, this reflection can be viewed as a rotation of angle $\pi$ around the vertical axis $\{0\}^2 \times \R$.)
A \emph{symmetric braid diagram with $2n$ strands} is a planar braid diagram with $2n$ strands
that is invariant under this reflection.
To make things precise, we assume that the upper (resp.\ lower) endpoints 
of each symmetric braid diagram with $2n$ strands are the points
$\{-n \dots ,-1, 1,\dots,n\} \times \{1\}$
(resp.\ the points $\{-n \dots ,-1, 1,\dots,n\} \times \{0\}$).
We consider the symmetric braid diagrams with $2n$~strands up to planar isotopy
preserving the crossings,
and up to the classical Reidemeister~II--III moves, as depicted in Figure~\ref{fig:mouv Reidemeister}.
We stress the fact that neither the isotopies nor the Reidemeister moves
have to be preserved under the reflection.

\begin{figure}[htbp]
   \begin{center}
   \includegraphics[height=2cm]{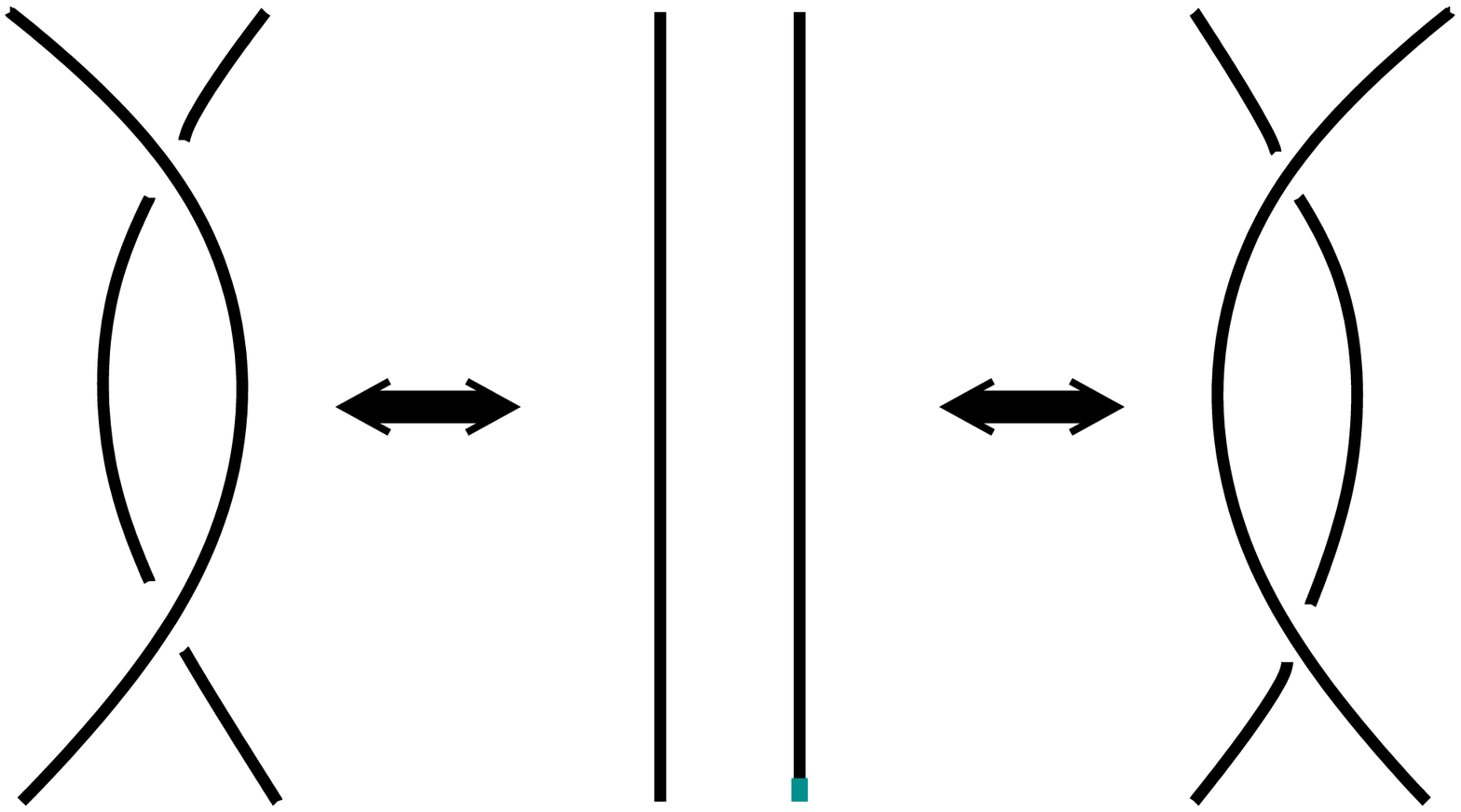}
   \hspace{2cm}
   \includegraphics[height=2cm]{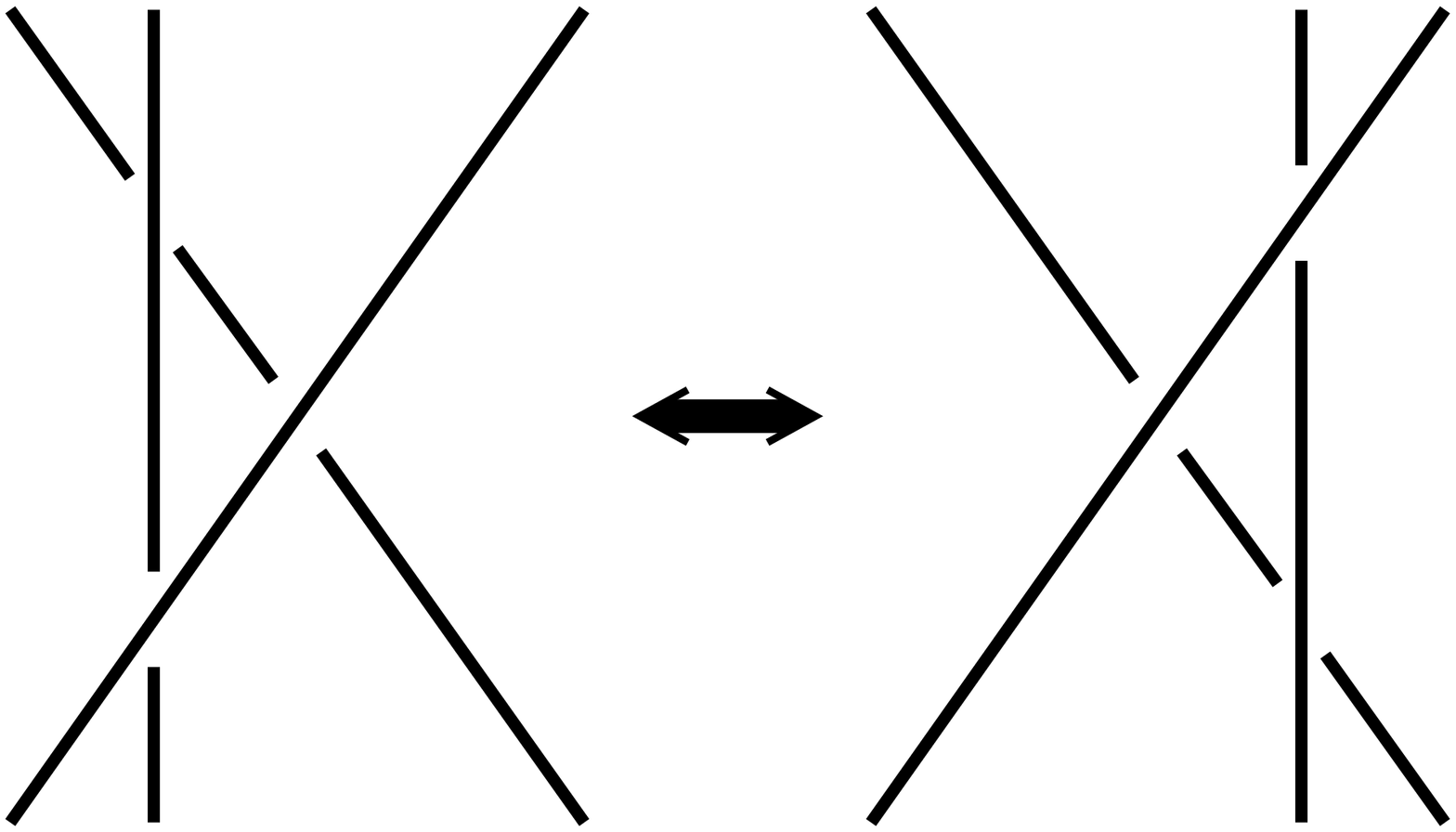}  
   \caption{Classical Reidemeister~II--III moves}
   \label{fig:mouv Reidemeister}
  \end{center}
\end{figure}

The equivalence classes of symmetric braid diagrams with $2n$ strands form a group,
which tom Dieck~\cite{tD} proved to be isomorphic to the generalized braid group~$\B_{B_n}$
of type~$B_n$.
In this isomorphism, the generator~$s_0$ is represented by the symmetric braid diagram 
with one positive crossing, as in Figure~\ref{fig:elem sym braid 0}, 
and each remaining generator $s_1, \ldots, s_{n-1}$ by the symmetric braid diagram 
with two symmetric positive crossings, as in Figure~\ref{fig:elem sym braid i}. 
From now on, we will identify each generator $s_i$ of~$\B_{B_n}$ with 
the corresponding symmetric braid. 

\begin{figure}[h!]
   \begin{center}
   \psfrag{1}{$\scriptstyle{1}$}
   \psfrag{2}{$\scriptstyle{2}$}
   \psfrag{m}{$\scriptstyle{n}$}
   \psfrag{-1}{$\scriptstyle{-1}$}
   \psfrag{-2}{$\scriptstyle{-2}$}
   \psfrag{-m}{$\scriptstyle{-n}$}
   \includegraphics[height=2.7cm]{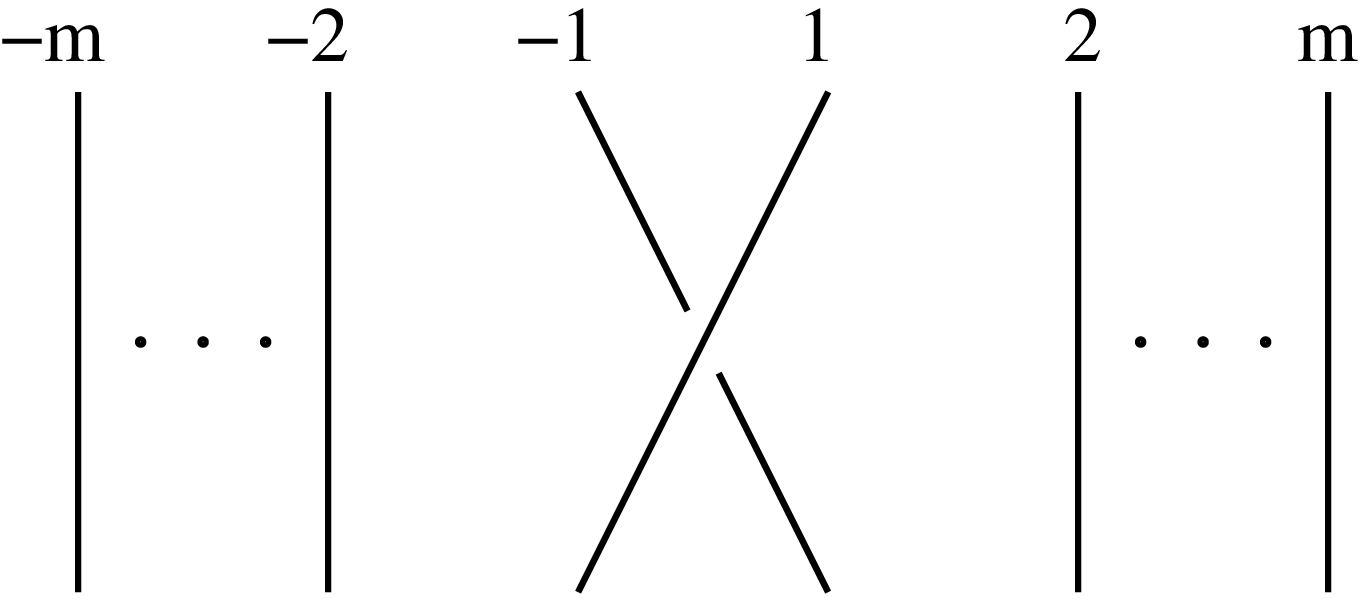}  
   \caption{The symmetric braid diagram~$s_0$}
   \label{fig:elem sym braid 0}
   \end{center}
\end{figure}

\begin{figure}[htbp]
  \begin{center}
  \psfrag{1}{$\scriptstyle{1}$}
  \psfrag{i-1}{$\scriptstyle{i-1}$}
  \psfrag{i}{$\scriptstyle{i}$}
  \psfrag{i+1}{$\scriptstyle{i+1}$}
  \psfrag{i+2}{$\scriptstyle{i+2}$}
  \psfrag{m}{$\scriptstyle{n}$}
  \psfrag{-1}{$\scriptstyle{-1}$}
  \psfrag{-i+1}{$\scriptstyle{-i+1}$}
  \psfrag{-i}{$\scriptstyle{-i}$}
  \psfrag{-i-1}{$\scriptstyle{-i-1}$}
  \psfrag{-i-2}{$\scriptstyle{-i-2}$}
  \psfrag{-m}{$\scriptstyle{-n}$}
   \includegraphics[height=2.7cm]{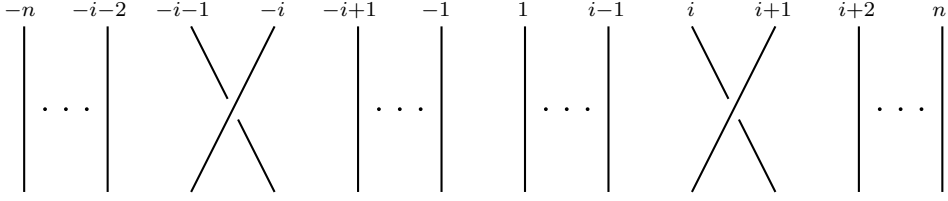}  
   \caption{The symmetric braid diagram~$s_i$ ($i>0$)}
   \label{fig:elem sym braid i}
  \end{center}
\end{figure}

In terms of symmetric braid diagrams, we see that
Relation~\eqref{relB1} holds because the symmetric braid diagrams corresponding to each
term of the relation are isotopic.
Similarly, the equality in Relation~\eqref{relB2} can be proven diagramatically using
Reidemeister~III move.
As for Relation~\eqref{relB3}, it can be proven as shown in Figure~\ref{fig:relquad}.
(Note that four Reidemeister~III moves have been used.)

\begin{figure}[h!]
   \begin{center}
   \includegraphics[height=3cm]{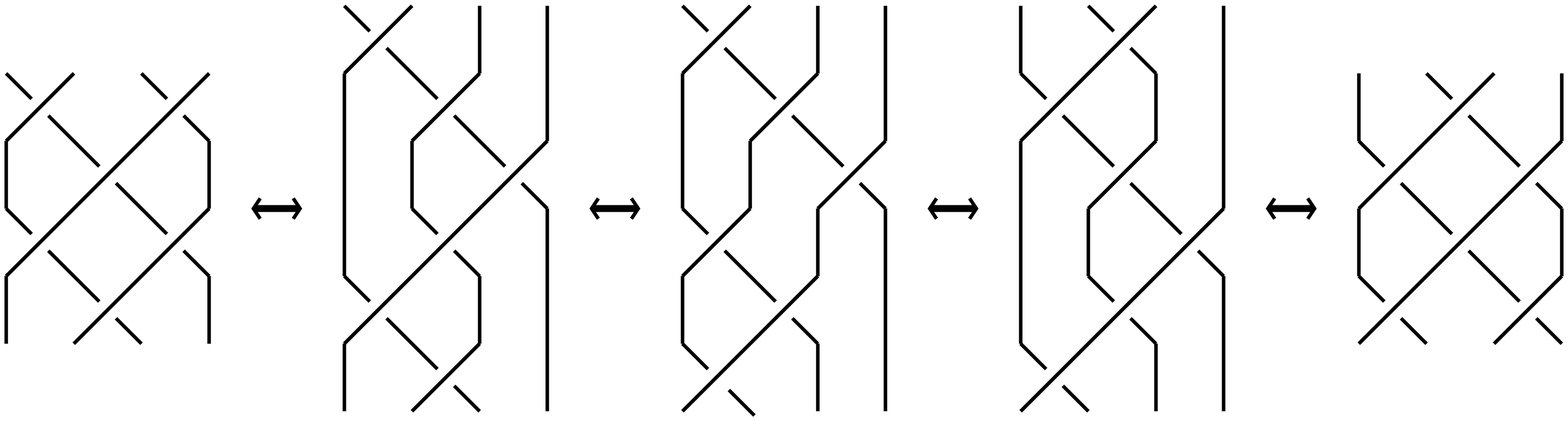}
   \caption{Proving Relation \eqref{relB3}}
   \label{fig:relquad}
   \end{center}
\end{figure}

Forgetting the symmetry condition yields an embedding of~$\B_{B_n}$ 
into the group~$\B_{2n}$ of usual braids with $2n$~strands. 
Before we give a precise for- mula for this embedding, let us 
shift the indices of the $2n-1$ generators~$\sigma_i$ of~$\B_{2n}$ by~$-n$; 
in this way the indexing set for the generators becomes the set $\{-n +1, \ldots, n-1\}$.
The re-indexed generators satisfy the same braid relations~\eqref{vb1} and~\eqref{vb2}.
We then define an embedding of~$\B_{B_n}$ into the group~$\B_{2n}$ by sending~$s_0$ to~$\sigma_0$, 
and each remaining~$s_i$ to~$\sigma_{-i}\sigma_i = \sigma_i \sigma_{-i}$ ($i=1, \dots , n-1$).

\section{Symmetric virtual braids}\label{sec-virsymm}

We now define a group~$\VB_{B_n}$, 
which will be our generalized virtual braid group of type~$B_n$.

\begin{defi}
The group~$\VB_{B_n}$ has the following presentation:
it is generated by $2n$ generators $s_0, s_1, \ldots, s_{n-1}$ and $z_0, z_1, \ldots, z_{n-1}$,
where $s_0, s_1, \ldots$, $s_{n-1}$ satisfy 
Relations~\eqref{relB1}, \eqref{relB2}, \eqref{relB3}, 
$z_1, \ldots, z_{n-1}$ satisfy the relations
\begin{equation}\label{relWB1}
z_i z_j = z_j z_i  ,\quad  \mbox{if} \  |i-j|>1 ,
\end{equation}
\begin{equation}\label{relWB2}
z_i z_{i+1}z_i = z_{i+1}z_i z_{i+1}  , \quad  \mbox{if} \  1 \leq i \leq n-2 ,
\end{equation}
\begin{equation}\label{relWB3}
z_0 z_1 z_0 z_1 = z_1 z_0 z_1 z_0  ,
\end{equation}
\begin{equation}\label{relWB0}
z_{i}^{2} = 1   ,\quad  \mbox{if} \  0 \leq i \leq n-1  ,
\end{equation}
and the following ``mixed relations" are satisfied
\begin{equation}\label{relmixB1}
s_i z_j = z_j s_i  ,\quad  \mbox{if} \   | i-j | > 1  ,
\end{equation}
\begin{equation}\label{relmixB2}
s_i z_{i+1} z_i = z_{i+1} z_i s_{i+1}  ,\quad  \mbox{if} \  1 \leq i \leq n-2  ,
\end{equation}
\begin{equation}\label{relmixB3}
s_0z_1z_0z_1 = z_1z_0z_1s_0  ,
\end{equation}
\begin{equation}\label{relmixB4}
z_0s_1z_0z_1 = z_1z_0s_1z_0  ,
\end{equation}
\begin{equation}\label{relmixB5}
s_0z_1s_0z_1 = z_1s_0z_1s_0  .
\end{equation}
\end{defi}

By analogy with tom Dieck's graphical description, we want to represent elements of~$\VB_{B_n}$
by \emph{symmetric virtual braid diagrams with $2n$ strands}.
These are planar virtual braid diagrams with $2n$ strands, as defined in Section~\ref{sec-VB1},
that are symmetric under the reflection in the vertical line $\{0\} \times \R$.
The reflection is supposed to preserve the virtual crossings as well as 
the positive (resp.\ the negative) crossings.
We consider the symmetric virtual braid diagrams with $2n$~strands up to planar isotopy
preserving the crossings,
up to the classical Reidemeister~II--III moves of Figure~\ref{fig:mouv Reidemeister}, 
up to the virtual Reidemeister~II--III moves of Figure~\ref{fig:RV}, 
and up to the mixed Reidemeister move of Figure~\ref{fig:Rmixte}.

We represent the generators $s_0, s_1, \ldots, s_{n-1}$ by the symmetric braid diagrams
of Section~\ref{sec-symm}. 
The generator~$z_0$ is sent to the symmetric virtual braid diagram 
with a single virtual crossing, as in Figure~\ref{fig:elem virt sym braid 0}.
Each remaining generator~$z_1, \ldots, z_{n-1}$ is sent to a symmetric virtual braid diagram 
with two symmetric virtual crossings as in Figure~\ref{fig:elem virt sym braid i}.

\begin{figure}[h!]
   \begin{center}
   \psfrag{1}{$\scriptstyle{1}$}
   \psfrag{2}{$\scriptstyle{2}$}
   \psfrag{m}{$\scriptstyle{n}$}
   \psfrag{-1}{$\scriptstyle{-1}$}
   \psfrag{-2}{$\scriptstyle{-2}$}
   \psfrag{-m}{$\scriptstyle{-n}$}
   \includegraphics[height=2.7cm]{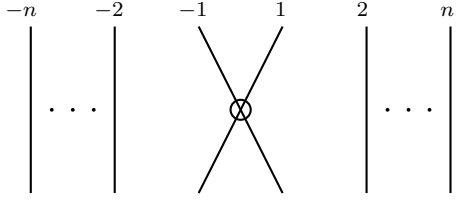}  
   \caption{The symmetric virtual braid diagram~$z_0$}
   \label{fig:elem virt sym braid 0}
   \end{center}
\end{figure}

\begin{figure}[htbp]
  \begin{center}
  \psfrag{1}{$\scriptstyle{1}$}
  \psfrag{i-1}{$\scriptstyle{i-1}$}
  \psfrag{i}{$\scriptstyle{i}$}
  \psfrag{i+1}{$\scriptstyle{i+1}$}
  \psfrag{i+2}{$\scriptstyle{i+2}$}
  \psfrag{m}{$\scriptstyle{n}$}
  \psfrag{-1}{$\scriptstyle{-1}$}
  \psfrag{-i+1}{$\scriptstyle{-i+1}$}
  \psfrag{-i}{$\scriptstyle{-i}$}
  \psfrag{-i-1}{$\scriptstyle{-i-1}$}
  \psfrag{-i-2}{$\scriptstyle{-i-2}$}
  \psfrag{-m}{$\scriptstyle{-n}$}
   \includegraphics[height=2.7cm]{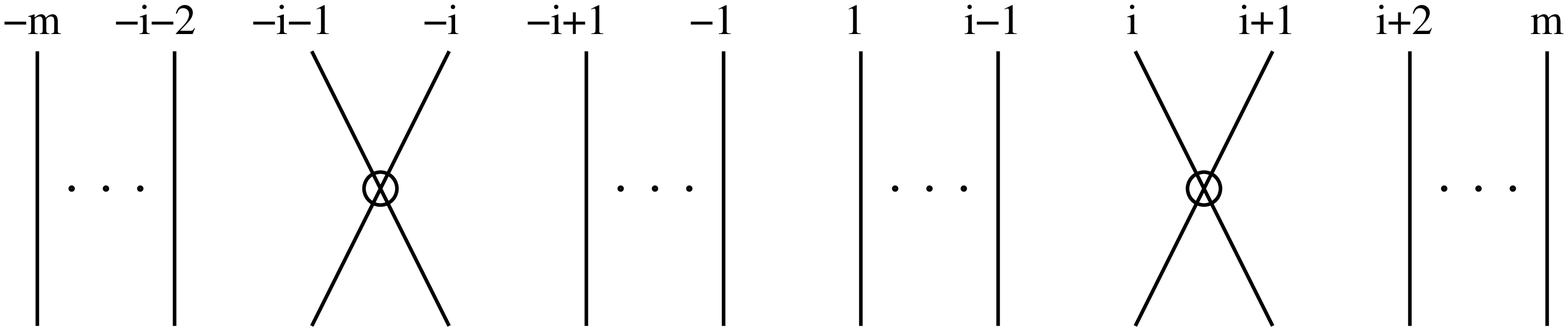}  
   \caption{The symmetric virtual braid diagram~$z_i$ ($i>0$)}
   \label{fig:elem virt sym braid i}
  \end{center}
\end{figure}

\begin{prop}
Considering symmetric braid diagrams as elements of $\VB_{2n}$ yields a well-defined group homomorphism $j : \VB_{B_n} \to \VB_{2n}$ defined by
\begin{equation}\label{id0}
j(s_0) = \sigma_0  , \qquad j(z_0) = \zeta_0  ,
\end{equation}
and
\begin{equation}\label{idi}
j(s_i) = \sigma_{-i} \sigma_i  = \sigma_i \sigma_{-i} , \qquad j(z_i) = \zeta_{-i} \zeta_i  = \zeta_i \zeta_{-i}
\end{equation}
for $1 \leq i \leq n-1$.
\end{prop}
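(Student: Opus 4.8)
The plan is to define $j$ on generators by formulas \eqref{id0} and \eqref{idi}, and then, invoking von Dyck's theorem (the universal property of a presentation by generators and relations), to check that the elements $j(s_0),\dots,j(s_{n-1}),j(z_0),\dots,j(z_{n-1})$ of $\VB_{2n}$ satisfy every defining relation of $\VB_{B_n}$, i.e.\ Relations~\eqref{relB1}--\eqref{relmixB5}. One first checks that the two expressions offered for $j(s_i)$ (resp.\ $j(z_i)$) agree: for $1\le i\le n-1$ one has $|i-(-i)|=2i\ge 2$, so $\sigma_{-i}\sigma_i=\sigma_i\sigma_{-i}$ by \eqref{vb1} and $\zeta_{-i}\zeta_i=\zeta_i\zeta_{-i}$ by \eqref{vb3}; thus the formulas are unambiguous. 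All the verifications below take place in $\VB_{2n}$, which we present by \eqref{vb1}--\eqref{vb7} with the generators $\sigma_j,\zeta_j$ re-indexed over $j\in\{-n+1,\dots,n-1\}$, as in Section~\ref{sec-symm}.

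The relations among the $s_i$ alone --- Relations~\eqref{relB1}, \eqref{relB2}, \eqref{relB3} --- hold for the elements $\sigma_0$ and $\sigma_{-i}\sigma_i$ ($1\le i\le n-1$) because this assignment is precisely tom Dieck's embedding $\B_{B_n}\hookrightarrow\B_{2n}$ recalled at the end of Section~\ref{sec-symm}, composed with the natural map $\B_{2n}\to\VB_{2n}$. The relations among the $z_i$ alone --- Relations~\eqref{relWB1}--\eqref{relWB0} --- follow similarly: Relations~\eqref{relWB1}, \eqref{relWB2}, \eqref{relWB3} hold for $\zeta_0$ and $\zeta_{-i}\zeta_i$ by literally repeating tom Dieck's computation with $\zeta$'s in place of $\sigma$'s (the $\zeta_j$ obey \eqref{vb3}, \eqref{vb4}, which are \eqref{vb1}, \eqref{vb2} for $\zeta$'s), while the extra relation $z_i^2=1$ is \eqref{vb5} when $i=0$ and reads $(\zeta_{-i}\zeta_i)^2=\zeta_{-i}^2\zeta_i^2=1$, by \eqref{vb3} and \eqref{vb5}, when $i\ge 1$.

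It remains to treat the mixed relations. For Relation~\eqref{relmixB1} with $|i-j|>1$: the image of each side is a product of $\sigma$'s with indices in $\{i,-i\}$ and $\zeta$'s with indices in $\{j,-j\}$, and any index of the first set differs by more than $1$ from any index of the second (equal signs give $|i-j|>1$, opposite signs give $i+j\ge 2$), so \eqref{vb6} lets one commute the factors past one another one at a time. For Relation~\eqref{relmixB2} with $1\le i\le n-2$: the ``positive block'' of indices $\{i,i+1\}$ and the ``negative block'' $\{-i-1,-i\}$ are disjoint and mutually at distance $\ge 2i+1\ge 3$, so by \eqref{vb1}, \eqref{vb3}, \eqref{vb6} every generator attached to one block commutes with every generator attached to the other; the relation therefore factors as the product of its ``positive part'', which is the type~$A$ mixed relation \eqref{vb7} with index~$i$, and its ``negative part'' $\sigma_{-i}\zeta_{-i-1}\zeta_{-i}=\zeta_{-i-1}\zeta_{-i}\sigma_{-i-1}$, which --- setting $k=-i-1$ --- is the ``mirror'' identity $\sigma_{k+1}\zeta_k\zeta_{k+1}=\zeta_k\zeta_{k+1}\sigma_k$; the latter is deduced from \eqref{vb7} (index~$k$) by multiplying on both sides by $\zeta_k\zeta_{k+1}$ and using \eqref{vb5}.

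The hard part will be Relations~\eqref{relmixB3}, \eqref{relmixB4}, \eqref{relmixB5}, which all involve the index~$0$; here the positive and negative blocks overlap (at the strands $\pm 1$) and the splitting trick fails. Each of these three identities must instead be verified by a direct computation inside the subgroup of $\VB_{2n}$ generated by $\sigma_{-1},\sigma_0,\sigma_1$ and $\zeta_{-1},\zeta_0,\zeta_1$, using the instances of \eqref{vb1}--\eqref{vb7} for indices in $\{-1,0,1\}$: in particular the braid relations $\sigma_{-1}\sigma_0\sigma_{-1}=\sigma_0\sigma_{-1}\sigma_0$ and $\sigma_0\sigma_1\sigma_0=\sigma_1\sigma_0\sigma_1$, their $\zeta$-analogues, and the mixed relations $\sigma_{-1}\zeta_0\zeta_{-1}=\zeta_0\zeta_{-1}\sigma_0$ and $\sigma_0\zeta_1\zeta_0=\zeta_1\zeta_0\sigma_1$ coming from \eqref{vb7}; a handy auxiliary identity, obtained from \eqref{vb7} and \eqref{vb5}, is $\zeta_{i+1}\sigma_i\zeta_{i+1}=\zeta_i\sigma_{i+1}\zeta_i$. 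Each of \eqref{relmixB3}--\eqref{relmixB5} then reduces to a word identity provable by a finite chain of such rewrites; I expect \eqref{relmixB5}, in which every crossing sits at index~$0$, to require the longest argument. Once all defining relations of $\VB_{B_n}$ have been checked, von Dyck's theorem produces the homomorphism $j:\VB_{B_n}\to\VB_{2n}$, and \eqref{id0}, \eqref{idi} hold by construction.
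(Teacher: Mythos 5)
Your treatment of the routine relations is fine and matches what the paper dismisses as obvious: the $s$-relations via tom Dieck's embedding composed with $\B_{2n}\to\VB_{2n}$, the $z$-relations by the same word computations with $\zeta$'s (which satisfy \eqref{vb3}--\eqref{vb5}), and \eqref{relmixB1}, \eqref{relmixB2} by splitting into positive and negative blocks (your distance estimate ``$\geq 2i+1$'' should be $2i$, but $2i\geq 2$ still suffices for the commutation relations, and your ``mirror'' version of \eqref{vb7} obtained using $\zeta_k^2=\zeta_{k+1}^2=1$ is correct).

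The genuine gap is that you never actually verify \eqref{relmixB3}, \eqref{relmixB4}, \eqref{relmixB5}: you only assert that each ``reduces to a word identity provable by a finite chain of such rewrites'' and that you ``expect'' \eqref{relmixB5} to be the longest. That is a statement of intent, not a proof, and these three relations are precisely where the entire content of the proposition lies --- the paper's proof consists of explicit derivations of them, carried out diagrammatically: \eqref{relmixB3} via two virtual Reidemeister~III moves and two mixed moves (Figure~\ref{fig:relquadmix1}), \eqref{relmixB4} via four mixed moves (Figure~\ref{fig:relquadmix2}), and \eqref{relmixB5} via the sequence in Figure~\ref{fig:relquadmix3}; each such diagrammatic step corresponds to an instance of \eqref{vb1}--\eqref{vb7}, so an algebraic chain of rewrites exists, but exhibiting it (or the equivalent picture) is exactly the work to be done. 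Moreover, you cannot appeal to plausibility here: Proposition~\ref{bizarre}\,(b) shows via Manturov's invariant that the equally natural-looking word $z_0s_1z_0s_1=s_1z_0s_1z_0$ has distinct images in $\VB_{2n}$, so for each of \eqref{relmixB3}--\eqref{relmixB5} an explicit derivation (diagrammatic or algebraic) must be supplied before von Dyck's theorem can be invoked.
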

Here again as in Section~\ref{sec-symm}, we have shifted the index of the generators of
the virtual braid group~$\VB_{2n}$ by~$-n$.

This shows that the relations defining the virtual braid group~$\VB_{B_n}$ of type~$B$
are adequate. 
\begin{rem} We do not claim that~$j: \VB_{B_n} \to \VB_{2n}$ is a monomorphism but we conjecture that it is.
Should this hold, then we could claim that the defining relations are sufficient
to define~$\VB_{B_n}$ as a subgroup of~$\VB_{2n}$.
\end{rem}

\begin{proof}
Using the relations defining~$\VB_{2n}$ (see Section~\ref{sec-VB1}),
we now check that $j(s_0), j(s_1), \ldots, j(s_{n-1})$,
$j(z_0), j(z_1), \ldots$, $j(z_{n-1})$ satisfy the defining relations of~$\VB_{B_n}$.

Relations~\eqref{relB1}--\eqref{relB3} and \eqref{relWB1}--\eqref{relmixB2} are obviously satisfied.

A graphical proof of Relation~\eqref{relmixB3} is given in Figure~\ref{fig:relquadmix1},
where we use the virtual Reidemeister~III move of Figure~\ref{fig:RV} twice and the mixed Reidemeister move
of Figure~\ref{fig:Rmixte} twice.

\begin{figure}[h!]
   \begin{center}
   \includegraphics[height=3cm]{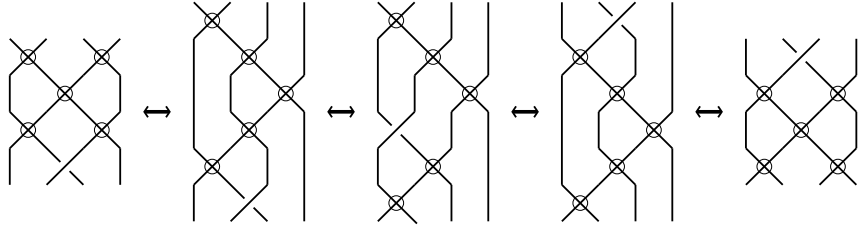}
   \caption{A graphical proof of Relation~\eqref{relmixB3}}
   \label{fig:relquadmix1}
   \end{center}
\end{figure}

We similarly obtain Relation~\eqref{relmixB4} using four mixed Reidemeister moves, 
as in Figure~\ref{fig:relquadmix2}.

\begin{figure}[h!]
   \begin{center}
   \includegraphics[height=3cm]{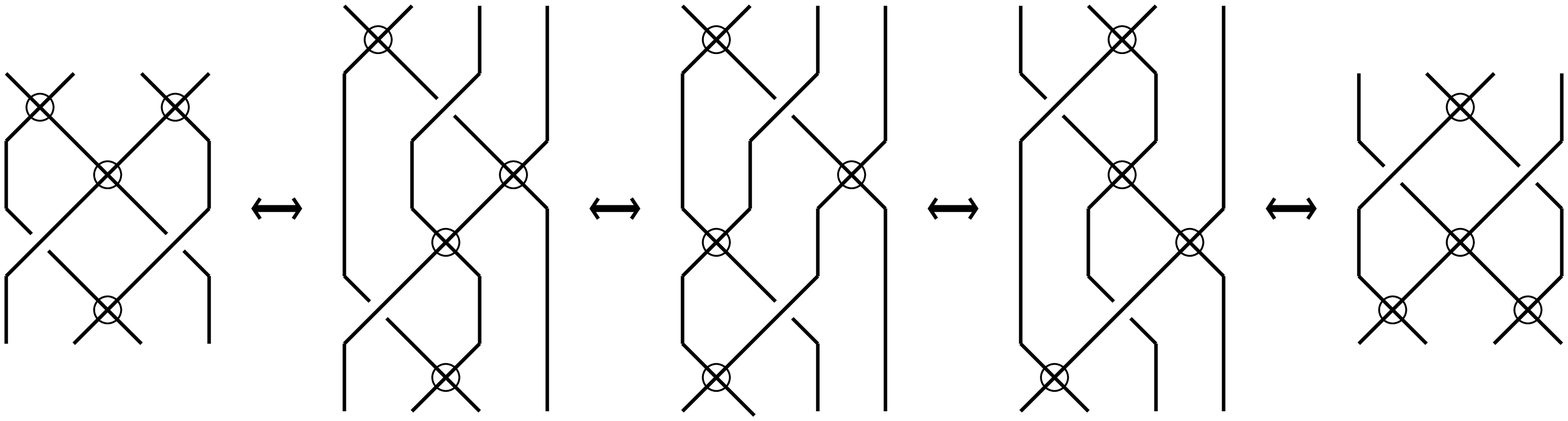}
   \caption{A graphical proof of Relation~\eqref{relmixB4}}
   \label{fig:relquadmix2}
   \end{center}
\end{figure}

Finally, a graphical proof of Relation~\eqref{relmixB5} is given in Figure~\ref{fig:relquadmix3}.

\begin{figure}[h!]
   \begin{center}
   \includegraphics[height=3cm]{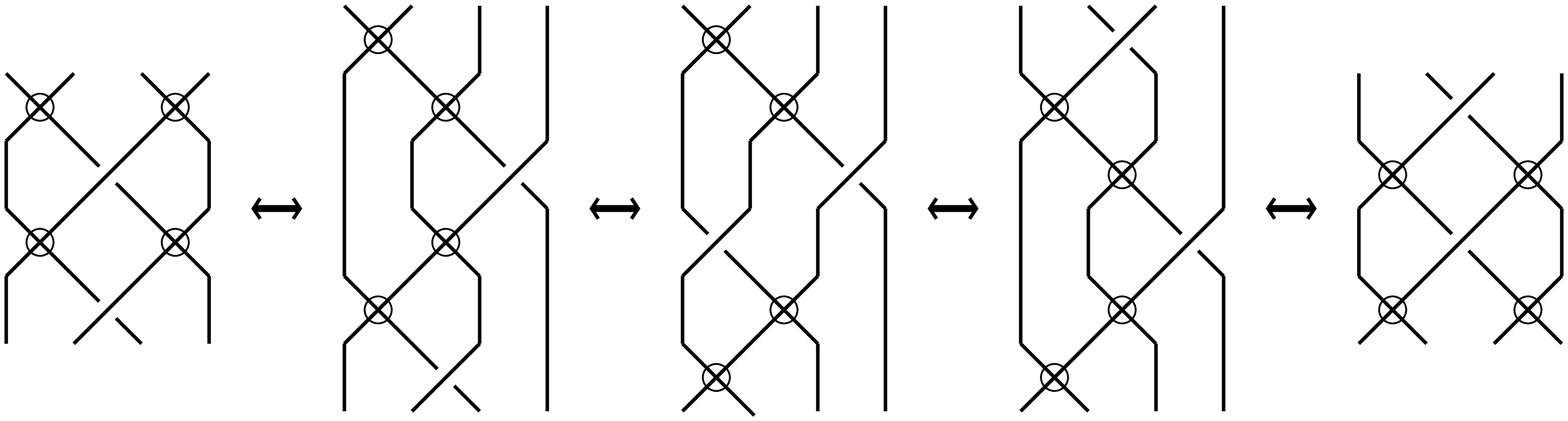}
   \caption{A graphical proof of Relation~\eqref{relmixB5}}
   \label{fig:relquadmix3}
   \end{center}
\end{figure}
\end{proof}

The inequalities recorded in the following proposition will come up in Remarks~\ref{pasbizarre}.

\begin{prop}\label{bizarre}

(a) In~$\VB_n$ we have 
\begin{equation*}
\sigma_i \zeta_i \neq \zeta_i \sigma_i 
\quad \text{and} \quad
\zeta_{i} \sigma_{i+1} \sigma_{i} \neq \sigma_{i+1} \sigma_{i} \zeta_{i+1} .
\end{equation*}

(b) In~$\VB_{B_n}$ we have
\begin{equation*}\label{eq-fausse}
z_0 s_1 z_0 s_1 \neq s_1 z_0 s_1 z_0 .
\end{equation*}
\end{prop}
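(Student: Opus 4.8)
The plan is to use Manturov's invariant $f : \VB_n \to \Aut(F_{n+1})$ recalled in Section~\ref{sec-MI}, exactly as announced before Proposition~\ref{bizarre}. Since $f$ is a group homomorphism, to prove that two words in the generators of~$\VB_n$ (resp.\ of~$\VB_{B_n}$ via the homomorphism~$j$ of the previous proposition) represent distinct elements it suffices to exhibit one free-group generator on which the two corresponding automorphisms disagree. So for part~(a) I would compute $f(\sigma_i\zeta_i)$ and $f(\zeta_i\sigma_i)$ on the generators $a_i, a_{i+1}$ (all other $a_j$ and $t$ are clearly fixed by both sides), using the composition convention that $f(\sigma_i\zeta_i) = f(\sigma_i)\circ f(\zeta_i)$ or its reverse — I would fix the convention explicitly and then just chase the two piecewise formulas. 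One expects, e.g., $f(\sigma_i\zeta_i)(a_i) = t\,a_{i+1}\,t^{-1}$-type expression versus $f(\zeta_i\sigma_i)(a_i)$ involving a conjugate by $a_i$, and these are manifestly different elements of the free group $F_{n+1}$ (one can see this by abelianizing, or by looking at the first letter). The same routine handles the second inequality $\zeta_i\sigma_{i+1}\sigma_i \neq \sigma_{i+1}\sigma_i\zeta_{i+1}$: here three indices $a_i, a_{i+1}, a_{i+2}$ are involved, and I would evaluate both composite automorphisms on, say, $a_{i+1}$, obtaining words that differ — again distinguishable after abelianization or by inspecting which $a_j$'s and which powers of $t$ occur.

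For part~(b), the key observation is that the composite $f\circ j : \VB_{B_n}\to \Aut(F_{2n+1})$ is again a well-defined group homomorphism (by the previous proposition, $j$ is a homomorphism, and $f$ is one for the $2n$-strand group after the index shift by $-n$; note $F_{2n+1}$ has generators $a_{-n},\dots,a_{-1},a_1,\dots,a_n,t$). Then $z_0 s_1 z_0 s_1 \neq s_1 z_0 s_1 z_0$ in~$\VB_{B_n}$ will follow once I show $f(j(z_0 s_1 z_0 s_1)) \neq f(j(s_1 z_0 s_1 z_0))$. Using $j(z_0)=\zeta_0$, $j(s_1)=\sigma_{-1}\sigma_1=\sigma_1\sigma_{-1}$, this reduces to a computation with the automorphisms $f(\zeta_0), f(\sigma_{-1}), f(\sigma_1)$, which only move the generators $a_{-1}, a_0, a_1$ (and fix $t$ and all other $a_j$). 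So the whole computation lives inside the free group on $\{a_{-1}, a_0, a_1, t\}$, and I would evaluate both composites on $a_0$ (or whichever generator gives the cleanest discrepancy) and compare the resulting words letter by letter, or pass to the abelianization $\Z^4$ where $t$-exponents and $a_j$-exponents can already separate the two sides.

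The main obstacle — really the only one, since conceptually the argument is a one-line application of ``homomorphisms separate, so evaluate'' — is bookkeeping: getting the composition order right, correctly tracking the two-index (resp.\ three-index) substitutions through a word of length three or four, and then producing a clean invariant (abelianization, or the leading letter, or the total $t$-exponent) that visibly distinguishes the two outputs. I would present the calculation as a short displayed chain of equalities for each side and end with one sentence identifying the distinguishing feature; for instance, if after abelianizing one side records a nonzero net power of $t$ on some $a_j$-component while the other records zero, that settles it. I do not expect any subtlety beyond this, so the proof should be a compact explicit computation rather than anything structural.
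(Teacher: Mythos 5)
Your overall strategy coincides with the paper's: use Manturov's homomorphism $f$ for part (a), and for part (b) push the two words through $j$ and apply $f$ to their images in $\VB_{2n}$, then exhibit a free-group generator on which the two automorphisms disagree. Part (a) is essentially fine, but your proposed separating devices are mostly red herrings: with the convention $f(uv)=f(u)\circ f(v)$ (the one consistent with the formulas) one gets $f(\zeta_i\sigma_i)(a_i)=t^{-1}a_it$ versus $f(\sigma_i\zeta_i)(a_i)=ta_{i+1}^{-1}a_ia_{i+1}t^{-1}$, and for the second inequality the images of $a_{i+1}$ are $a_{i+2}^{-1}ta_{i+1}t^{-1}a_{i+2}$ versus $ta_{i+2}^{-1}a_{i+1}a_{i+2}t^{-1}$; in each case the two words have the same abelianization, the same total $t$-exponent and the same set of letters, so neither abelianizing nor counting $t$'s nor listing which $a_j$'s occur separates them. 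What does separate them is simply that they are distinct reduced words in a free group; that observation is all you need, so part (a) survives.

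The genuine gap is in your plan for (b). First, $F_{2n+1}$ is free on $a_{-n+1},\dots,a_{-1},a_0,a_1,\dots,a_n,t$ (you omitted $a_0$). More seriously, the images $j(z_0s_1z_0s_1)=\zeta_0\sigma_{-1}\sigma_1\zeta_0\sigma_{-1}\sigma_1$ and $j(s_1z_0s_1z_0)=\sigma_{-1}\sigma_1\zeta_0\sigma_{-1}\sigma_1\zeta_0$ involve $f(\sigma_1)$, which moves $a_2$, so the computation does not live in the free group on $\{a_{-1},a_0,a_1,t\}$ as you claim. Worse, a direct check shows the two composite automorphisms actually agree on $a_{-1}$, $a_0$ and $a_1$ (for instance both send $a_0$ to $t a_2^{-1} a_1 a_2 t^{-1}$), and of course on $t$ and on every other generator except $a_2$: the unique generator detecting the inequality is $a_2$, precisely the one your restriction excludes. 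Carried out as written, your computation would therefore find agreement everywhere you look and you could not conclude (and abelianization fails here too, since both images of $a_2$ abelianize to $a_{-1}$). The repair is to evaluate both automorphisms at $a_2$ and compare the resulting reduced words, which is exactly what the paper does.
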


\begin{proof}
(a) For each relation we will use Manturov's invariant~$f$ (see Section~\ref{sec-MI}) to show that both sides of the relation do not have the same invariant. This will imply the desired inequality in~$\VB_n$. For the leftmost relation, we have
$$ f\left( \zeta_i \sigma_i \right) (a_i) = t^{-1} a_i t \quad \mbox{and} \quad  f\left(  \sigma_i \zeta_i\right) (a_i) = t a_{i+1}^{-1} a_i a_{i+1} t^{-1}, $$
which are different.

For the righmost relation, we have
\begin{equation*}
f(\zeta_{i} \sigma_{i+1} \sigma_{i}):
\left\{
\begin{array}{ccll}
t & \mapsto & t  , & \\
a_i & \mapsto & a_{i+2} , &  \\
a_{i+1} & \mapsto & a_{i+2}^{-1}t a_{i+1} t^{-1}a_{i+2} , \\
a_{i+2} & \mapsto &  a_{i+2}^{-1}t^{-1} a_{i}t a_{i+2} , \\
a_j & \mapsto & a_j  & \mbox{if} \  j \neq i, i+1,i+2
\end{array}
\right.
\end{equation*}
and
\begin{equation*}
f(\sigma_{i+1} \sigma_{i} \zeta_{i+1}):
\left\{
\begin{array}{ccll}
t & \mapsto & t  , & \\
a_i & \mapsto & a_{i+2} , &  \\
a_{i+1} & \mapsto & t a_{i+2}^{-1} a_{i+1} a_{i+2} t^{-1}, \\
a_{i+2} & \mapsto & t^{-1} a_{i+2}^{-1} a_{i} a_{i+2} t, \\
a_j & \mapsto & a_j  & \mbox{if} \  j \neq i, i+1,i+2.
\end{array}
\right.
\end{equation*}
So $f(\zeta_{i} \sigma_{i+1} \sigma_{i})$ and $f(\sigma_{i+1} \sigma_{i} \zeta_{i+1})$ are not equal in $\Aut \left( F_{n+1} \right)$.

(b) We want to compare the two elements $z_0 s_1 z_0 s_1$ and $s_1 z_0 s_1 z_0$ of $\VB_{B_n}$. Their image in $\VB_{2n}$ under the homomorphism $j$ are respectively the elements $\zeta_0 \sigma_{-1} \sigma_1 \zeta_0 \sigma_{-1} \sigma_1 $ and $\sigma_{-1} \sigma_1 \zeta_0 \sigma_{-1} \sigma_1 \zeta_0 $. To them, we apply Manturov's invariant, the group homomorphism $f: \VB_{2n} \to \Aut(F_{2n+1})$,
where $F_{2n+1}$ is the free group on the generators $a_{-n+1}, \ldots, a_{-1}, a_0, a_1, \ldots, a_n,t$, which is given by the same formulas as in Section~\ref{sec-MI}.

A simple computation shows that
\[
f(\zeta_0 \sigma_{-1} \sigma_{1}\zeta_0 \sigma_{-1}\sigma_1)(a_2)
= a_2^{-1} t^{-1} a_0^{-1} t a_2 a_{1}^{-1} t^{-1} a_{-1} t a_1 a_2^{-1} t^{-1} a_0 t a_2
\]
and
\[
f(\sigma_{-1}\sigma_{1}\zeta_0 \sigma_{-1}\sigma_1\zeta_0)(a_2)
= a_2^{-1} a_1 ^{-1} a_2 t^{-1} a_{0}^{-1} a_{-1} a_0 t a_2^{-1} a_1 a_2 \, ,
\]
which are different.
\end{proof}

\begin{rems}\hfill\\
$\bullet$ The inequalities of Proposition~\ref{bizarre}\,(a) imply that, in $\VB_{B_n}$ we have
\begin{equation*}
s_i z_i \neq z_i s_i \quad \text{if} \  0 \leq i \leq n-1
\end{equation*}
and
\begin{equation*}
z_{i} s_{i+1} s_{i} \neq s_{i+1} s_{i} z_{i+1}  \quad \text{if} \  1 \leq i \leq n-1 .
\end{equation*}

\noindent $\bullet$ The second inequality of Proposition~\ref{bizarre}\,(a) means that the 
mixed Reidemeister move of Figure~\ref{fig:R3interdit}, involving one virtual crossing and two positive crossings, is not allowed. Specializing at $t=1$ one observes that $f(\zeta_{i} \sigma_{i+1} \sigma_{i})$ and $f(\sigma_{i+1} \sigma_{i} \zeta_{i+1})$ become equal in $\Aut \left( F_n \right)$. Adding the relation $\zeta_{i} \sigma_{i+1} \sigma_{i} = \sigma_{i+1} \sigma_{i} \zeta_{i+1}$ to the presentation of $\VB_n$, one precisely obtain a presentation of the group of welded braids with $n$ strands defined in~\cite{FRR}.
\end{rems}

\begin{figure}[htbp]
  \begin{center}
   \includegraphics[height=2cm]{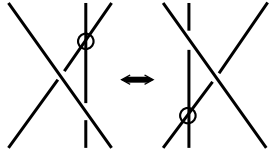}  
   \caption{A forbidden Reidemeister move}
   \label{fig:R3interdit}
  \end{center}
\end{figure}

\section{Soergel bimodules}\label{sec-Soergel}

\subsection{Generalities}

We consider some bimodules which have been introduced by Soergel \cite{S1}, \cite{S2}, \cite{S3} in the general context of Coxeter groups.

Let $(\W,\gen)$ be a Coxeter system with generating set $\gen = \{ s_0, \dots , s_{n-1}\}$ of cardinality~$n$. 
As usual, we denote the order of $st$, where $s,t \in \gen$, by~$m(s,t)$ and we set 
\[
\langle s_i, s_j \rangle = - \cos \frac{\pi}{m(s_i,s_j)}  \in \R \, .
\]

Let $R = \R[X_0, \ldots,X_{n-1}]$ be a real polynomial algebra in $n$~indeterminates $X_0, \ldots,X_{n-1}$.


For $i = 0, 1, \ldots, n-1$ we define an algebra automorphism~$\alpha_i$ of~$R$ by
\begin{equation*}
\alpha_i (X_j) = X_j -2 \, \langle s_i,s_j \rangle \, X_i
\end{equation*}
for all $j = 0, 1, \ldots, n-1$. It is easy to check that $\alpha_i$ is an involution
and that there is a unique action of~$\W$ on~$R$ by algebra automorphisms such that
\[
s_i (P) = \alpha_i(P)
\]
for all $i = 0, 1, \ldots, n-1$ and $P \in R$.

For any $w \in \W$ we consider the following objects:

\begin{itemize}

\item[(a)] the subalgebra~$R^w$ of elements of~$R$ fixed by~$w$;

\item[(b)] the $R$-bimodule $B_w = R \ot_{R^w} R$;

\item[(c)] the $R$-bimodule~$R_w$, which coincides with~$R$ as a left $R$-module whereas $a \in R$ acts on~$R_w$ on the right by multiplication by~$w(a)$.
\end{itemize}

\begin{rems} \hfill\\
$\bullet$ To avoid confusion, let us point out that:
\begin{itemize}
\item[-] among these bimodules, the only ones which are commonly called Soergel bimodules are the bimodules $B_s$ for $s \in \gen$,
\item[-] for an arbitrary $w \in \W$, Soergel uses the notation $B_w$ for different bimodules than us (some indecomposables) that we will not consider here.
\end{itemize}

\noindent $\bullet$ The bimodules $R_w$ already appeared in Soergel's work and they turned out to be important in the study of Soergel bimodules.
\end{rems}

We introduce a grading on the algebra $R$ by setting 
$\deg (X_k)=2$ for all $k=0,\dots,n-1$. It induces a grading on the bimodules we consider.
We will indicate a shift of the grading by curly brackets: if $M=\underset{i\in 
\Z}{\bigoplus}M_i$ is a $\Z$--graded bimodule and $p$ an integer, then the 
$\Z$--graded bimodule $M\{p\}$ is defined by $M\{p\}_i = M_{i-p}$ for all $i 
\in \Z$.

In the sequel we will systematically identify any bimodule $R_{w}\ot_R B_{w '}$ (resp. $ B_{w } \ot_R B_{w'}$) with $R_{w} \ot_{R^{w '}} R$ (resp. $R \ot_{R^{w }} R \ot_{R^{w'}} R$) since
$$R_{w}\ot_R B_{w '} =  R_{w} \ot_R R \ot_{R^{w '}} R \cong R_{w} \ot_{R^{w '}} R $$
and
$$ B_{w } \ot_R B_{w'} = R \ot_{R^{w }} R \ot_R R \ot_{R^{w'}} R \cong R \ot_{R^{w }} R \ot_{R^{w'}} R.$$

%

The following lemma is straightforward. It will be used repeatedly in Section~\ref{sec-ourcat}.

\begin{lem}\label{iso bimod} 
For all $w,w' \in \W$, there are isomorphisms of $R$-bimodules
\[
R_{w} \ot_R R_{w'} \overset{\cong}{\longrightarrow} R_{w w'}
\]
sending $a \ot b$ to $a w(b)$, and  
\[
R_{w}\ot_R B_{w '} \overset{\cong}{\longrightarrow}  B_{w w' w^{-1}} \ot_R R_{w}
\]
sending $a \ot b$ to $a \ot w(b)$ ($a,b \in R$).
\end{lem}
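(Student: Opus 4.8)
The plan is to verify both isomorphisms by exhibiting the stated maps, checking that they are well-defined bimodule homomorphisms, and then producing explicit inverses. I expect the whole argument to be ``straightforward'' in the sense advertised, with the only subtlety being the careful bookkeeping of which action (via $w$, via $w'$, or via $ww'$, resp.\ $ww'w^{-1}$) governs the right module structure at each stage.

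For the first map $R_w \ot_R R_{w'} \to R_{ww'}$, $a \ot b \mapsto a\, w(b)$, I would first check that the formula is balanced over $R$: for $r \in R$, the element $a r \ot b$ and $a \ot r b$ must have the same image. Here $ar$ means $a$ multiplied on the right in $R_w$, i.e.\ $a\cdot w(r)$ in $R$; so $ar \ot b \mapsto a\,w(r)\,w(b) = a\,w(rb)$, while $a \ot rb \mapsto a\,w(rb)$, and these agree. Next, left $R$-linearity is immediate since left multiplication acts on the first tensor factor. For right $R$-linearity one computes: $a \ot b$ acted on the right by $r$ is $a \ot b\,w'(r)$ (right action in $R_{w'}$), which maps to $a\,w(b\,w'(r)) = a\,w(b)\,(ww')(r)$; and indeed $a\,w(b)$ acted on the right in $R_{ww'}$ by $r$ is $a\,w(b)\,(ww')(r)$. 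So the map is a bimodule homomorphism. The inverse sends $c \in R_{ww'}$ to $c \ot 1$; one checks this lands in the correct balanced tensor product and that the two composites are the identity, using $w(1)=1$ and the fact that every element of $R_w \ot_R R_{w'}$ can be written as $a \ot 1$ by absorbing the second factor: $a \ot b = a\,w(b) \ot 1$.

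For the second map $R_w \ot_R B_{w'} \to B_{ww'w^{-1}} \ot_R R_w$, I identify $R_w \ot_R B_{w'}$ with $R_w \ot_{R^{w'}} R$ and $B_{ww'w^{-1}} \ot_R R_w$ with $R \ot_{R^{ww'w^{-1}}} R_w$, as in the identifications set up just before the lemma. The formula $a \ot b \mapsto a \ot w(b)$ must first be checked to be well-defined over $R^{w'}$: if $r \in R^{w'}$, then in the source $ar \ot b = a \ot rb$ (since $r$ is $w'$-invariant, both the right action on $R_w$ via $w$ and the left action need reconciling — one uses that $a\cdot(\text{right mult.\ by }r) = a\,w(r)$ and then $w(r)$ should be pushed through), so I need $w(r) \in R^{ww'w^{-1}}$, which holds precisely because $r \in R^{w'}$: indeed $ww'w^{-1}(w(r)) = ww'(r) = w(r)$. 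Then left $R$-linearity and right $R$-linearity (via $w$ on both sides) are routine term-by-term checks. The inverse is $a \ot b \mapsto a \ot w^{-1}(b)$, and the two composites are visibly identities since $w\circ w^{-1} = \id$.

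\textbf{Main obstacle.} There is no deep obstacle; the one place demanding care is the interplay, in the second isomorphism, between the balancing relation over the subalgebra $R^{w'}$ on the source and over $R^{ww'w^{-1}}$ on the target — one must check that $w$ carries $R^{w'}$ isomorphically onto $R^{ww'w^{-1}}$ (immediate from $w\,R^{w'}\,w^{-1}$-conjugation of the stabilizer) so that the tensor-product relations match up. Once that observation is in place, every remaining verification is a one-line computation, which is why the lemma is stated as straightforward and left essentially to the reader.
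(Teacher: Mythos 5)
Your verification is correct, and since the paper states the lemma as straightforward and gives no proof, your direct check — well-definedness of the balancing relations (in particular that $w$ maps $R^{w'}$ into $R^{w w' w^{-1}}$), bimodule linearity via the twisted right actions, and explicit inverses — is exactly the intended argument. Nothing to add.
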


\subsection{The type $B_n$}\label{inv}

When the Coxeter system~$(\W,\gen)$ is of type~$B_n$, then 
\[
\begin{array}{rcccll}
\langle s_i,s_i \rangle & = & - \cos \left( \pi \right) & = & 1 &\ \mbox{if} \ i=0, \dots,n-1  ,\\
\langle s_i,s_j \rangle & = & - \cos \left( {\pi}/{2} \right) & = & 0 &\ \mbox{if} \ |i-j|>1  ,\\
\langle s_i,s_{i+1} \rangle & = & - \cos \left({\pi}/{3} \right) & = & - {1}/{2} &\ \mbox{if} \ i=1, \dots,n-2  ,\\
\langle s_0,s_1 \rangle & = & - \cos \left( {\pi}/{4} \right) & = & -{\sqrt{2}}/{2}   .& \\
\end{array}
\]
See~\cite{Hu}.

It follows that the automorphisms $\alpha_0, \alpha_1, \ldots, \alpha_{n-1}$ act on
the polynomial algebra $R = \R[X_0, \ldots,X_{n-1}]$ by the formulas
\begin{equation*}
\alpha_0:
\left\{
\begin{array}{ccll}
X_0 & \mapsto & -X_0  , & \\
X_1 & \mapsto & X_1 + \sqrt{2}\,  X_0  , & \\
X_i & \mapsto & X_i & \mbox{if} \  i>1 , 
\end{array}
\right.
\end{equation*}
\begin{equation*}
\alpha_1:
\left\{
\begin{array}{ccll}
X_0 & \mapsto & X_0 + \sqrt{2}\,  X_1  , & \\
X_1 & \mapsto & -X_1   , & \\
X_2 & \mapsto & X_1 + X_2  , & \\
X_i & \mapsto & X_i & \mbox{if} \  i>2  . 
\end{array}
\right.
\end{equation*}
For $1 < j < n$, we have
\begin{equation*}
\alpha_j:
\left\{
\begin{array}{ccll}
X_j & \mapsto & -X_j  , &  \\
X_i & \mapsto & X_i + X_j & \mbox{if} \  i=j-1,j+1  , \\
X_i & \mapsto & X_i & \mbox{if} \  i \neq j-1,j,j+1  .
\end{array}
\right.
\end{equation*}

For future use, we shall need an explicit set of 
algebraically independent homogeneous generators for certain subalgebras~$R^w$
of invariants. Using Chevalley's theorem (see \cite[Chap.~3]{Hu} or \cite[Chap.~31]{TY} for more details), one can verify that 
\[
R^{s_0} = \R [ \sqrt{2}X_0+2X_1, X_{0}^{2}, X_2, \dots, X_{n-1} ] \, ,
\]
\[
R^{s_1s_0s_1} = \R [ X_0, X_1 + X_2 , X_1( \sqrt{2}X_0 + X_1), X_3, \dots,X_{n-1} ] \, ,
\]
\[
R^{s_1} = \R [ 2X_0+\sqrt{2}X_1 , X_1+2X_2,X_1^2, X_3, \dots,X_{n-1}] 
\]
and
\[
R^{s_0s_1s_0} = \R [ X_0 + \sqrt{2}X_2, X_1 , X_0( X_0 + \sqrt{2}X_1 ), X_3, \dots, X_{n-1} ] \, .
\]

\section{Weak categorification of the virtual braid group of type~$B_n$}\label{sec-cat}

\subsection{Rouquier's weak categorification of generalized braid groups}\label{Rougen}

We explain Rouquier's construction following~\cite{R}. 
It works for any generalized braid group~$\B_{\W}$ associated to a finite Coxeter group~$\W$. 

To each braid generator $s_i \in \B_{\W}$ 
we assign the cochain complex $F(s_i)$ of graded $R$-bimodules
\begin{equation}\label{CX sigma}
F(s_i):  0  \longrightarrow  R\{2\}  \xrightarrow{\rb_i}  B_{s_i} 
\longrightarrow  0 \, ,
\end{equation}
where $B_{s_i}$ sits in cohomological degree 0. 
The degree-preserving $R$-bimodule morphism~$\rb_i$ sends any $a\in R$ to $aX_i \ot 1 + a \ot X_i \in B_{s_i}$.

To the inverse~$s_i^{-1}$ of~$s_i$ we assign 
the cochain complex $F(s_i^{-1})$ of graded $R$-bimodules
\begin{equation}\label{CX sigma-1}
F(s_i^{-1}) :  0  \longrightarrow  B_{s_i}\{-2\}  \xrightarrow{\br_i}  
R\{-2\}  \longrightarrow  0 \, ,
\end{equation}
where $B_{s_i}\{-2\}$ sits in cohomological degree 0 and 
the degree-preserving $R$-bimodule morphism $\br_i$ is given by multiplication; in other words, it sends
$a\ot b \in B_{s_i}$ to~$ab \in R$. 

To the unit element $1 \in \B_{\W}$  we assign the trivial complex of graded $R$-bimodules
\begin{equation}\label{CX one}
F(1)  :  0  \longrightarrow  R  \longrightarrow  0 \, ,
\end{equation}
where $R$ sits in cohomological degree~$0$. 
The complex $F(1)$ is obviously a unit for the tensor product of complexes. 

Finally to any word $w = s_{i_1}^{\varepsilon_1} \dots s_{i_k}^{\varepsilon_k}$ 
where $\varepsilon_1, \dots ,\varepsilon_k = \pm 1$, we assign the complex of 
graded $R$-bimodules $F(w)=F(s_{i_1}^{\varepsilon_1})\ot_R \dots \ot_R 
F(s_{i_k}^{\varepsilon_k})$.

In this context, Rouquier established that 
if $w$ and $w'$ are words representing the same element of $\B_{\W}$, 
then $F(w)$ and~$F(w ')$ are homotopy equivalent complexes of $R$-bimodules.
This statement is what we mean by Rouquier's weak categorification of generalized braid groups and what we want, for the type $B$ case, to generalize to virtual braids. But in fact, Rouquier enhanced the above result in~\cite{R}. He also conjectured his categorification to be faithfull while it will not be true for our generalization to virtual braids; see Remarks \ref{pasbizarre}.

\subsection{Weak categorification of~$\VB_{B_n}$}\label{sec-ourcat}

Our aim is to extend the construction of Rouquier detailed in the former section to the virtual braid group~$\VB_{B_n}$ of type $B_n$ defined in Section~\ref{sec-virsymm}. 

The cochain complexes associated to the generators~$s_i^{\pm 1}$ of~$\VB_{B_n}$ are 
the ones defined by~\eqref{CX sigma} and~\eqref{CX sigma-1}. 

To the generator~$z_i$ we assign the complex of graded $R$-bimodules concentrated in degree~$0$
\begin{equation}\label{def-zi}
F(z_i)  :  0  \longrightarrow  R_{s_i}  \longrightarrow  0 \, .
\end{equation}
To any word~$w$ in the generators~$s_i^{\pm 1}$ and~$z_i$ of~$\VB_{B_n}$
we assign the tensor product over~$R$ of the complexes associated to the generators 
involved in the expression of~$w$.

We now state our main result.

\begin{thm}\label{catvirt} 
If $w$ and $w '$ are words representing the same element of~$\VB_{B_n}$, 
then $F(w)$ and~$F(w')$ are homotopy equivalent complexes of $R$-bimodules.
\end{thm}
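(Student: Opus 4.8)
The plan is to verify the statement relation by relation: since $\VB_{B_n}$ is defined by an explicit presentation, it suffices to check that for each defining relation $w = w'$ of the group, the associated complexes $F(w)$ and $F(w')$ are homotopy equivalent. Indeed, if this holds, then any finite sequence of applications of the defining relations turns a word $w$ into any equivalent word $w'$ while preserving the homotopy type of $F$, because $F$ is multiplicative with respect to concatenation of words (it is the tensor product over $R$ of the complexes attached to the letters) and because $-\ot_R -$ preserves homotopy equivalences of complexes of $R$-bimodules. So the whole theorem reduces to a finite list of local checks.

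The local checks split into three groups. First, the relations \eqref{relB1}, \eqref{relB2}, \eqref{relB3} involving only the $s_i$: these are exactly the braid relations of type $B_n$, and their verification is precisely Rouquier's weak categorification of $\B_{B_n}$ recalled in Section~\ref{Rougen}, so nothing new is needed — I would simply cite \cite{R}. Second, the pure permutation relations \eqref{relWB1}--\eqref{relWB0} among the $z_i$: since $F(z_i)$ is the complex concentrated in degree~$0$ given by the invertible bimodule $R_{s_i}$, and $F$ of a product is the tensor product, these amount to bimodule isomorphisms $R_{s_i}\ot_R R_{s_j}\cong R_{s_is_j}$ and their consequences; these follow immediately from Lemma~\ref{iso bimod} (the first isomorphism there, $R_w\ot_R R_{w'}\cong R_{ww'}$) together with the fact that the $s_i$ satisfy the Coxeter relations of type $B_n$ inside $\W$. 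In particular $R_{s_i}\ot_R R_{s_i}\cong R_{s_i^2}=R_{1}=R$, giving \eqref{relWB0}. Third, and this is the substantive part, the mixed relations \eqref{relmixB1}--\eqref{relmixB5}.

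For the mixed relations the key computational tool is again Lemma~\ref{iso bimod}: its second isomorphism $R_w\ot_R B_{w'}\cong B_{w"w'w^{-1}}\ot_R R_w$ lets me ``commute'' an $R_{s_i}$ factor past a Soergel bimodule $B_{s_j}$ at the cost of conjugating the index, and the naturality of this isomorphism lets me transport the differentials $\rb_j$ and $\br_j$ as well. Concretely, for \eqref{relmixB1} ($|i-j|>1$) one uses that $s_i$ and $s_j$ commute, so conjugation is trivial and $F(s_i)\ot_R F(z_j)$ and $F(z_j)\ot_R F(s_i)$ are manifestly isomorphic as complexes. For the three- and four-letter mixed relations \eqref{relmixB2}--\eqref{relmixB5}, I would apply Lemma~\ref{iso bimod} repeatedly to move all the $z$-factors (i.e.\ all the $R_{s_j}$'s) to one side, checking at each step that the conjugated indices and transported differentials on the two sides match; the relations \eqref{relmixB3}, \eqref{relmixB4}, \eqref{relmixB5} are designed precisely so that the corresponding conjugations of Soergel bimodules agree, mirroring the diagrammatic proofs in Figures~\ref{fig:relquadmix1}--\ref{fig:relquadmix3}. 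Since each $F(z_i)$ is a one-term complex of an invertible bimodule, no actual homotopies are needed for the mixed relations — one gets honest isomorphisms of complexes — so the only place genuine chain homotopies enter is the already-known type $B$ braid relations. The main obstacle I expect is bookkeeping: keeping track of the grading shifts $\{\pm 2\}$ and of exactly which conjugated reflection indexes each transported $B$-bimodule and morphism through the iterated applications of Lemma~\ref{iso bimod}, especially in the length-four relations \eqref{relmixB3} and \eqref{relmixB5} where $s_0$ is a one-crossing generator and interacts with the order-$4$ braid relation; verifying that $s_1 s_0 s_1(\cdot)$ versus $s_0 s_1 s_0(\cdot)$ act compatibly on the relevant bimodules is where one must be careful, and this is presumably why the explicit invariant subalgebras $R^{s_0}, R^{s_1s_0s_1}, R^{s_1}, R^{s_0s_1s_0}$ were recorded at the end of Section~\ref{inv}.
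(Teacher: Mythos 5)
Your reduction to the defining relations, the treatment of \eqref{relB1}--\eqref{relB3} via Rouquier, of \eqref{relWB1}--\eqref{relWB0} via the first isomorphism of Lemma~\ref{iso bimod}, and of \eqref{relmixB1}--\eqref{relmixB4} by commuting the $R_{s_j}$-factors past the $B$-bimodules using the second isomorphism of Lemma~\ref{iso bimod} (together with the Coxeter relations in $\W$, e.g.\ $s_1s_0s_1\,s_0\,s_1s_0s_1=s_0$) all match the paper and are sound; also your observation that the mixed relations give honest isomorphisms of complexes, not just homotopy equivalences, is correct.

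The gap is in Relation~\eqref{relmixB5}. There, \emph{both} occurrences of $s_0$ contribute genuine two-term complexes, and after you have pushed the two $R_{s_1}$-factors to one side (using $R_{s_1}\ot_R R_{s_1}\cong R$), the degree-$0$ terms of the two complexes become $B_{s_0}\ot_R B_{s_1s_0s_1}$ and $B_{s_1s_0s_1}\ot_R B_{s_0}$. Lemma~\ref{iso bimod} only moves bimodules of type $R_w$ past bimodules of type $B_{w'}$; it says nothing about interchanging two $B$-type bimodules, and the fact that $s_0$ and $s_1s_0s_1$ commute in $\W$ does not by itself give any isomorphism $B_{s_0}\ot_R B_{s_1s_0s_1}\cong B_{s_1s_0s_1}\ot_R B_{s_0}$. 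So ``checking that the conjugated indices match'' cannot close this case: the required ingredient, which is the heart of the paper's proof, is an explicit $R$-bimodule isomorphism $\varphi: B_{s_0}\ot_R B_{s_1s_0s_1}\to B_{s_1s_0s_1}\ot_R B_{s_0}$ defined on the generators $X_i$ of the middle tensor factor, whose well-definedness is verified on the Chevalley generators of $R^{s_0}$ and $R^{s_1s_0s_1}$ listed in Section~\ref{inv} (this uses a non-obvious polynomial identity writing $X_2^2$ in terms of elements of $R^{s_0}\cap R^{s_1s_0s_1}$). Moreover, mere existence of some isomorphism would not suffice: one needs precisely the properties $\varphi(1\ot p\ot 1)=p\ot 1\ot 1$ for $p\in R^{s_0}$ and $\varphi(1\ot p\ot 1)=1\ot 1\ot p$ for $p\in R^{s_1s_0s_1}$ in order for $\varphi$, together with $\pm\id$ in cohomological degrees $-2$ and $-1$, to commute with the four transported differentials. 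Your proposal gestures at the invariant subalgebras but does not contain this construction, and the method you actually propose (iterated use of Lemma~\ref{iso bimod}) cannot produce it.
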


\begin{proof}
It is enough to check that there are homotopy equivalences between the complexes associated 
to the pair of words appearing in each defining relation of~$\VB_{B_n}$.

The checking of this for Relations~\eqref{relB1}, \eqref{relB2}, \eqref{relB3} is a consequence of
Rouquier's work for generalized braid groups. 

Relations \eqref{relWB1}--\eqref{relWB0} only involve the virtual generators~$z_i$. 
In view of the simple form of the complex~$F(z_i)$,
the isomorphims of the corresponding complexes directly follow from 
the first isomorphism in Lemma~\ref{iso bimod}. 

Similarly, the isomorphism of complexes associated to Relations~\eqref{relmixB1} and~\eqref{relmixB2} 
can be constructed as performed in~\cite{Th1} in the case of the categorification of the virtual braid group of type~$A$.

We are left with the mixed relations~\eqref{relmixB3}--\eqref{relmixB5}.
Let us first deal with Relation~\eqref{relmixB3}. 
We have to prove that the complexes $F(s_0z_1z_0z_1)$ and $F(z_1z_0z_1s_0)$ are isomorphic. 
A simple computation shows that the complex $F( z_1z_0z_1s_0)$ is isomorphic to the following:
\begin{equation*}
0  \longrightarrow  R_{s_1s_0s_1}\{2\} \overset{d}{\longrightarrow}  R_{s_1s_0s_1}  \ot_R B_{s_0} \longrightarrow  0 \, ,
\end{equation*}
where
\[ 
d(a) = a \left( \alpha_1 \alpha_0 \alpha_1 \left(X_0 \right) \ot 1 + 1 \ot X_0 \right) = a \left( X_0 \ot 1 + 1 \ot X_0 \right)
\]
for all $a \in R$.
The complex $F(s_0z_1z_0z_1)$ is isomorphic to
\begin{equation*}
0  \longrightarrow  R_{s_1s_0s_1}\{2\}  \overset{d'}{\longrightarrow}  B_{s_0} \ot_R R_{s_1s_0s_1} \longrightarrow  0 \, ,
\end{equation*}
where 
\[
d'(a) = a \left( X_0 \ot 1 + 1 \ot X_0 \right)
\]
for all $a \in R$.
Lemma~\ref{iso bimod} provides an isomorphism 
\[
\mu : R_{s_1s_0s_1}  \ot_R B_{s_0} \overset{\cong}{\longrightarrow} 
B_{s_1s_0s_1s_0s_1s_0s_1} \ot_R R_{s_1s_0s_1} \, ; 
\]
it is given by $\mu (a\ot b) = a \ot \alpha_1 \alpha_0 \alpha_1(b)$, where $a,b\in R$.
Now by Relation~\eqref{relB3} the latter bimodule is equal to $B_{s_0} \ot_R R_{s_1s_0s_1}$. 
This allows us to build the following isomorphism of complexes 
between $F(z_1z_0z_1s_0)$ and $F(s_0z_1z_0z_1)$:
\[
\xymatrix{
0 \ar[r] &  R_{s_1s_0s_1}\{2\} \ar^(0.45){d}[r] \ar_{\id}[d]  &  R_{s_1s_0s_1}  \ot_R B_{s_0} \ar[r] \ar_{\mu}[d] &  0 \\
0 \ar[r]  &  R_{s_1s_0s_1}\{2\} \ar^(0.45){d'}[r]   &  B_{s_0} \ot_R R_{s_1s_0s_1} \ar[r]  &  0
}
\]
The vertical maps (and similarly their inverse) commute with the differentials. Indeed, for $a\in R_{s_1s_0s_1}$,
\[
\begin{array}{rcl}
\mu \circ d (a) & = & \mu \left(a \left( X_0 \ot 1 + 1 \ot X_0 \right) \right) \\
& = & a \left( X_0 \ot 1 + 1 \ot \alpha_1 \alpha_0 \alpha_1 \left( X_0 \right) \right) \\
& = & a \left( X_0 \ot 1 + 1 \ot X_0 \right) \\
& = & d'(a) \, .
\end{array}
\]

Similar arguments allow us to construct an isomorphism of complexes between 
$F(z_0s_1z_0z_1)$ and~$F(z_1z_0s_1z_0)$,
which proves that~\eqref{relmixB4} is satisfied on the level of complexes.

In order to handle Relation~\eqref{relmixB5}, the first step is to find an isomorphism 
between~$B_{s_0} \ot_R R_{s_1} \ot_R B_{s_0} \ot_R R_{s_1}$ and~$ R_{s_1} \ot_R B_{s_0} \ot_R R_{s_1} \ot_R B_{s_0}$.
Applying Lemma~\ref{iso bimod} again, we first observe that 
\[
B_{s_0} \ot_R R_{s_1} \ot_R B_{s_0} \ot_R R_{s_1} \cong B_{s_0} \ot_R B_{s_1s_0s_1}
\]
and
\[
R_{s_1} \ot_R B_{s_0} \ot_R R_{s_1} \ot_R B_{s_0} \cong B_{s_1s_0s_1} \ot_R B_{s_0} \, .
\]
Then using the generating set of~$R^{w}$ for $w \in \{s_0,s_1s_0s_1\}$ exhibited in Section~\ref{inv}, 
we can make an isomorphism of $R$-bimodules 
\[
\varphi : B_{s_0} \ot B_{s_1s_0s_1} \rightarrow B_{s_1s_0s_1} \ot B_{s_0}
\]
explicit by setting
\begin{equation*}
\begin{array}{rcll}
\varphi \left( 1 \ot 1 \ot 1 \right) & = & 1 \ot 1 \ot 1 \, ,& \\
\varphi \left( 1 \ot X_0 \ot 1 \right) & = & 1 \ot 1 \ot X_0 \, ,& \\
\varphi \left(1 \ot X_1 \ot 1 \right) & = &  -X_2 \ot 1 \ot 1 + 1 \ot 1 \ot \left( X_1 + X_2 \right) \, ,& \\
\varphi \left(1 \ot X_i \ot 1 \right) & = & X_i \ot 1 \ot 1 & \mbox{if} \  i>1 \, . 
\end{array}
\end{equation*}
The map~$\varphi$ is clearly surjective since any element of~$R$ can be written 
as a sum of products of elements of~$R^{s_0}$ and~$R^{s_1s_0s_0}$. 
Let us prove that this isomorphism of $R$-bimodules is well-defined. 
We have to check that
\begin{equation}\label{verif1}
\varphi \left( 1 \ot p \ot 1 \right) = p \ot 1 \ot 1
\end{equation}
for all $p \in R^{s_0}$; and
\begin{equation}\label{verif2}
\varphi \left( 1 \ot p \ot 1 \right) = 1 \ot 1 \ot p
\end{equation}
for all $p \in R^{s_1s_0s_1}$.
It is enough to check \eqref{verif1} (resp.\ \eqref{verif2}) for $p$ equal to the generating elements of $R^{s_0}$ (resp.\ of~$R^{s_1s_0s_1}$). 

For $p = X_0$ (resp.\ $p=X_2$) Equality~\eqref{verif2} (resp.\ Equality~\eqref{verif1}) follows directly from the definition of~$\varphi$. For $p=X_i$ with $i>2$, Equalities~\eqref{verif1} and~\eqref{verif2} follow from the fact that $X_i \in R^{s_0} \cap R^{s_1s_0s_1}$. 

It remains to deal with the elements~$\sqrt{2}X_0 + 2X_1$ and~$X_{0}^{2}$ of~$R^{s_0}$,
and with the elements $X_1 +X_2$  and $X_1 \left( \sqrt{2}X_0 +X_1 \right)$ of~$R^{s_1s_0s_1}$. 
Equality~\eqref{verif1} is obvious for~$X_{0}^{2}$ since $X_{0}^{2} \in R^{s_0} \cap R^{s_1s_0s_1}$. 
By definition,
\[
\varphi \bigl( 1 \ot ( \sqrt{2}X_0 + 2X_1 ) \ot 1 \bigr) 
= -2X_2 \ot 1 \ot 1 + 1 \ot 1 \ot ( \sqrt{2}X_0 +2X_1 +2X_2 ) .
\]
Now, $\sqrt{2}X_0 +2X_1 +2X_2$ belongs to $R^{s_0} \cap R^{s_1s_0s_1}$, so we get the expected equality
\[
\varphi \bigl( 1 \ot ( \sqrt{2}X_0 + 2X_1 ) \ot 1 \bigr) =    ( \sqrt{2}X_0 + 2X_1 ) \ot 1  \ot 1 \, .
\]
For $X_1 +X_2$, Equality~\eqref{verif2} follows directly from the definition of~$\varphi$; we have
\begin{eqnarray*}
\varphi \bigl(1 \ot ( X_1 +X_2 ) \ot 1 \bigr)
& = & -X_2 \ot 1 \ot 1 + 1 \ot 1 \ot (X_1 +X_2 ) + X_2 \ot 1 \ot 1 \\
& = & 1 \ot 1 \ot \left(X_1 +X_2 \right) \, .
\end{eqnarray*}

The last element $p \in R^{s_1s_0s_1}$ for which we have to check Equality~\eqref{verif2} is $p=X_1 \left( \sqrt{2}X_0 +X_1 \right)$. By definition,
\[
\begin{array}{rcl}
\varphi \bigl( 1 \ot X_1 ( \sqrt{2}X_0 +X_1 ) \ot 1\bigr) 
& = &
-X_2 \ot 1 \ot \sqrt{2}X_0  \\
&  &+ 1 \ot 1 \ot \sqrt{2}X_0 ( X_1 + X_2 )+ X_{2}^{2} \ot 1 \ot 1   \\
&  & -2 X_2 \ot 1 \ot (X_1 + X_2 ) \\
&  & + 1 \ot 1 \ot (X_1 + X_2 )^2 \\
& = & 1 \ot 1 \ot \bigl( \sqrt{2}X_0 ( X_1 + X_2 ) + (X_1 + X_2 )^2 \bigr) \\
&   & - X_2 \ot 1 \ot ( \sqrt{2}X_0 +2X_1 +2X_2 ) \\
&  & + X_{2}^{2} \ot 1 \ot 1 \, .
\end{array}
\]
Now remark that $X_{2}^{2}$ can be written as follows:
\begin{multline*}
X_{2}^{2} = 
\bigl( X_1 ( \sqrt{2}X_0 + X_1 ) - (X_1 + X_2 )^2 - \sqrt{2}X_0 ( X_1 + X_2 ) \bigr) \\
+ ( \sqrt{2} X_0 + 2X_1 + 2X_2 ) X_2 \, . 
\end{multline*}
One can check that both 
\[
X_1 ( \sqrt{2}X_0 + X_1 ) - (X_1 + X_2 )^2 - \sqrt{2}X_0 ( X_1 + X_2 )
\] 
and $\sqrt{2} X_0 + 2X_1 + 2X_2 $ belong to~$R^{s_0} \cap R^{s_1s_0s_1}$. 
So replacing~$X_{2}^{2}$ by its latter expression in the expression of $\varphi \bigl( 1 \ot X_1 ( \sqrt{2}X_0 +X_1) \ot 1 \bigr)$, 
we obtain what we expected, namely
\[
\varphi \bigl( 1 \ot X_1 ( \sqrt{2}X_0 +X_1) \ot 1 \bigr)= 1 \ot 1 \ot X_1 ( \sqrt{2}X_0 +X_1)  \, .
\]

The existence of the bimodule isomorphism~$\varphi$ leads to an isomorphism between the complexes 
$F(s_0z_1s_0z_1)$ and $F(z_1s_0z_1s_0)$. In fact,
$F(s_0z_1s_0z_1)$ is isomorphic to the complex
\[
\xymatrix{
    &   & B_{s_0}\{2\} \ar^{d^{-1}_{1}}[dr]  &  & \\
  0 \ar[r] &  R\{4\} \ar^{d^{-2}_{1}}[ur] 
\ar_{d^{-2}_{2}}[dr] &   & B_{s_0} \ot_R B_{s_1s_0s_1} \ar[r] & 0  \\
  &   & B_{s_1s_0s_1}\{2\} \ar_{d^{-1}_{2}}[ur]  &  & & \\
}
\]
whose differentials are obtained by composing the ones of $F(s_0z_1s_0z_1)$  
with the isomorphism of Lemma~\ref{iso bimod}. More precisely,
\[
\begin{array}{rcl}
d^{-2}_{1} (a) & = & a \left( X_0 \ot 1 +1 \ot X_0 \right) ,\\
d^{-2}_{2} (a) & = & -a \left(\alpha_1 \left(X_0 \right) \ot 1 + 1 \ot \alpha_1 \left(X_0 \right) \right) \\
 & = & -a \left( \left(X_0 + \sqrt{2}X_1 \right) \ot 1 + 1 \ot  \left(X_0 + \sqrt{2}X_1 \right) \right), \\
d^{-1}_{1} (a \ot b) & = & a \left( 1 \ot \alpha_1 \left(X_0 \right) \ot 1 + 1 \ot 1 \ot \alpha_1 \left(X_0 \right) \right) b\\
& = & a \left( 1 \ot \left(X_0 + \sqrt{2}X_1 \right) \ot 1 + 1 \ot 1 \ot \left(X_0 + \sqrt{2}X_1 \right) \right) b,\\
d^{-1}_{2} (a \ot b) & = & a \left( X_0 \ot 1 \ot 1 +1 \ot X_0 \ot 1 \right) b 
\end{array}
\]
for all $a,b,c \in R$.
Similarly, the complex $F(z_1s_0z_1s_0)$ is isomorphic to
\[
\xymatrix{
    &   & B_{s_0}\{2\} \ar^{d'^{-1}_{1}}[dr]  & &  \\
 0 \ar[r] &  R\{4\} \ar^{d'^{-2}_{1}}[ur] 
\ar_{d'^{-2}_{2}}[dr] &   &  B_{s_1s_0s_1} \ot_R B_{s_0} \ar[r] & 0  \\
  &   & B_{s_1s_0s_1}\{2\} \ar_{d'^{-1}_{2}}[ur]  &  & &  \\
}
\]
whose differentials are obtained by composing the ones of~$F(z_1s_0z_1s_0)$ with the isomorphism of Lemma~\ref{iso bimod}, namely
\[
\begin{array}{rcl}
d'^{-2}_{1} (a) & = & -a \left( X_0 \ot 1 +1 \ot X_0 \right), \\
d'^{-2}_{2} (a) & = & a \left(\alpha_1 \left(X_0 \right) \ot 1 + 1 \ot \alpha_1 \left(X_0 \right) \right) \\
 & = & a \left( \left(X_0 + \sqrt{2}X_1 \right) \ot 1 + 1 \ot  \left(X_0 + \sqrt{2}X_1 \right) \right), \\
d'^{-1}_{1} (a \ot b) & = & a \left(  \alpha_1 \left(X_0 \right) \ot 1 \ot 1 + 1 \ot \alpha_1 \left(X_0 \right) \ot 1 \right) b\\
& = & a \left( \left(X_0 + \sqrt{2}X_1 \right) \ot 1 \ot 1 + 1 \ot \left(X_0 + \sqrt{2}X_1 \right) \ot 1 \right) b,\\
d'^{-1}_{2} (a \ot b) & = & a \left( 1 \ot X_0 \ot 1 +1 \ot 1 \ot X_0 \right) b 
\end{array}
\]
for all $a,b,c \in R$. We define an isomorphism between these two complexes using the identity map (up to a sign) for all factors outside of cohomological degree~$0$ and using~$\varphi$ in degree~$0$. This can be summarized by the following commutative diagram: 
\[
\xymatrix{
    &   & B_{s_0}\{2\} \ar@/l1cm/_(0.6){\id}[dddd] \ar^{d^{-1}_{1}}[dr]  &  & \\
  0 \ar[r] &  R\{4\} \ar_{-\id}[dddd] \ar^{d^{-2}_{1}}[ur] \ar_{d^{-2}_{2}}[dr] &   & B_{s_0} \ot_R B_{s_1s_0s_1} \ar[r] \ar_{\varphi}[dddd] & 0  \\
  &   & B_{s_1s_0s_1}\{2\} \ar_{d^{-1}_{2}}[ur] \ar@/l1cm/_(0.4){\id}[dddd] &  & & \\
 &  &  &  &  \\
    &   & B_{s_0}\{2\} \ar^{d'^{-1}_{1}}[dr]   & &  \\
  0 \ar[r] &  R\{4\}  \ar^{d'^{-2}_{1}}[ur] \ar_{d'^{-2}_{2}}[dr] &   &  B_{s_1s_0s_1} \ot_R B_{s_0} \ar[r]  & 0  \\
  &   & B_{s_1s_0s_1}\{2\} \ar_{d'^{-1}_{2}}[ur]   &  & &  \\
}
\]
This morphism of complexes is well-defined: the identities commute with the differentials 
and for all $a, b\in R$,
\[
\begin{array}{rcl}
\varphi \circ d_{1}^{-1} (a\ot b) & = & \varphi \left( a \left( 1 \ot \left(X_0 + \sqrt{2}X_1 \right) \ot 1 + 1 \ot 1 \ot \left(X_0 + \sqrt{2}X_1 \right) \right) b \right) \\
& = & a \left( \left(X_0 + \sqrt{2}X_1 \right) \ot 1 \ot 1 + 1 \ot 1 \ot \left(X_0 + \sqrt{2}X_1 \right) \right) b \\
& = & d'^{-1}_{1} (a\ot b) 
\end{array}
\]
since $\left(X_0 + \sqrt{2}X_1 \right) \in R^{s_0}$. We also have
\[
\begin{array}{rcl}
\varphi \circ d^{-1}_{2} (a \ot b) & = & \varphi \left( a \left( X_0 \ot 1 \ot 1 +1 \ot X_0 \ot 1 \right) b \right) \\
& = & a \left( X_0 \ot 1 \ot 1 +1 \ot 1 \ot X_0 \right) b \\
& = & d'^{-1}_{2} (a \ot b) 
\end{array}
\]
since $ X_0 \in R^{s_1 s_0 s_1}$. This completes the proof.
\end{proof}

\begin{rems}\label{pasbizarre} \hfill\\
$\bullet$ It is important to work in the homotopy category rather than in the derived category since the complexes $F(s_i)$ and $F(s_{i}^{-1})$ are both quasi-isomorphic to $F(z_i)$ while they are not homotopy equivalent to it. 

\noindent $\bullet$ We observed in Proposition~\ref{bizarre} that quite unexpectedly the relation 
\[
z_0s_1z_0s_1 = s_1z_0s_1z_0
\]
does not hold in~$\VB_{B_n}$.
Nevertheless the two complexes $F(z_0s_1z_0s_1)$ and $F(s_1z_0s_1z_0)$~are isomorphic.

The existence of an isomorphism between these complexes follows 
from the isomorphim of $R$-bimodules 
$$\psi:B_{s_1} \ot_R B_{s_0s_1s_0} \rightarrow B_{s_0s_1s_0} \ot_R B_{s_1}.$$
The isomorphism $\psi$ can be expressed in a similar way as the isomorphism $\varphi: B_{s_0} \ot B_{s_1s_0s_1} \rightarrow B_{s_1s_0s_1} \ot B_{s_0}$. Then, just as in the last part of the proof of theorem \ref{catvirt}, this enable us to construct an isomorphism between the complexes $F(z_0s_1z_0s_1)$ and $F(s_1z_0s_1z_0)$.

\noindent $\bullet$ As in the type~$A$ case (treated in~\cite{Th1}), 
the complexes $F(s_iz_i)$ and $F(z_is_i)$ are isomorphic,
which contrasts with the fact that the relation $s_iz_i = z_is_i$ does not hold in~$\VB_{B_n}$. 

\noindent $\bullet$ Although we do not manage to state here that $j:\VB_{B_n} \rightarrow \VB_{2n}$ is injective, it would be interesting to know at least if, for any $a\in \ker(j)$, the complexes $F(a)$ and $F(1)$ are homotopy equivalent.
\end{rems}

\vspace{-0.2cm}

\section*{Acknowledgements}

I thank my thesis advisor C.~Kassel for his guidance and his help with writing up this paper.

\vspace{-0.3cm}


\bibliographystyle{alpha}

\bibliography{biblio4}

\end{document}